\documentclass{amsart}
\usepackage{amsmath,amssymb}
\usepackage[pdftex]{graphicx}
\usepackage{braket}
\usepackage{color}

\title[Correction terms of $(+1)$-surgeries along $(2,q)$-cablings]{Heegaard Floer correction terms of $(+1)$-surgeries along $(2,q)$-cablings}
\author{Kouki Sato}
\date{}

\newtheorem{dfn}{Definition}
\newtheorem{question}{Question}
\newtheorem{thm}{Theorem}
\newtheorem{prop}{Proposition}
\newtheorem{lem}{Lemma}
\newtheorem{cor}{Corollary}

\DeclareMathOperator{\punc}{punc}
\DeclareMathOperator{\image}{Im}

\begin{document}
\maketitle

\begin{abstract}
The Heegaard Floer correction term ($d$-invariant) is an invariant of rational homology 3-spheres equipped with a Spin$^c$ structure. In particular, the correction term of  1-surgeries along knots in $S^3$ is a ($2\mathbb{Z}$-valued) knot concordance invariant $d_1$. 
In this paper, we estimate $d_1$ for the $(2,q)$-cable of any knot $K$. This estimate does not depend on the knot type of $K$. If $K$ belongs to a certain class which contains all 
negative knots, then equality holds. As a corollary, we show that the relationship between $d_1$ and the Heegaard Floer $\tau$-invariant is very weak in general.
\end{abstract}

\section{Introduction}

Throughout this paper we work in the smooth category,
all manifolds are compact, orientable and oriented.
If $X$ is a closed 4-manifold, then $\punc X$ denotes $X$ with an open 4-ball
deleted.

\subsection{Correction term and (2,q)-cablings}

In \cite{ozsvath-szabo},
 Ozsv\'{a}th and Szab\'{o} introduced a rational homology
cobordism invariant $d$ for rational homology 3-spheres equipped with
a Spin$^c$ structure from Heegaard Floer homology theory.
Here rational homology cobordism is defined as follows.
\begin{dfn}
\normalfont
For two rational homology 3-spheres $Y_i$
with Spin$^c$ structure $\frak{t_i}$ $(i = 1,2)$,
we say that $(Y_1, \frak{t}_1)$ is  
{\it rational homology cobordant to }$(Y_2, \frak{t}_2)$
if there exists an oriented cobordism
$W$ from $Y_1$ to $Y_2$ with $H_j (W; \mathbb{Q})=0$ ($j=1,2$)
which can be endowed with a Spin$^c$ structure $\frak{s}$
whose restrictions to the $Y_i$ are the $\frak{t}_i$  ($i=1,2$). 
\end{dfn}
This relation is an equivalence relation on the set of pairs $(Y, \frak{t})$
where $Y$ is a rational homology 3-sphere and $\frak{t}$ is a Spin$^c$
structure on $Y$. Moreover, the connected sum operation endows the 
quotient set $\theta^c$ of this relation
with the structure of an abelian group.

The invariant $d$ is called the {\it correction term}.
In particular, $d$ is a group homomorphism from $\theta^c$ to $\mathbb{Q}$.
Note that if $Y$ is an integer homology 3-sphere,
then $Y$ has a unique Spin$^c$ structure.
Hence in this case, we may denote the correction term simply by $d(Y)$ and 
it is known that the value of the invariant becomes an even integer.

Here we remark that for the integer homology 3-sphere $S^3_1(K)$ obtained by $(+1)$-surgery along a knot $K$, $d(S^3_1(K))$ is not only a rational homology cobordism invariant of 
$S^3_1(K)$, but also a knot concordance invariant of $K$.
In fact, Gordon \cite{gordon} proved that if two knots $K_1$ and $K_2$ are concordant,
then $S^3_1(K_1)$ and $S^3_1(K_2)$ are integer homology cobordant,
and this implies that $d(S^3_1(K_1)) = d(S^3_1(K_2))$.  
In the rest of the paper we denote $d(S^3_1(K))$ simply by $d_1(K)$ and investigate
$d_1$ as a knot concordance invariant.
Note that $d_1$ is a map from the knot concordance group to $2\mathbb{Z}$,
but not a group homomorphism.

While explicit formulas for some knot classes have been given
(for instance, alternating knots \cite{ozsvath-szabo2} and
torus knots \cite{borodzik-nemethi}),
calculating $d_1$ is difficult in general.
The calculation of $d_1$ is studied in \cite{peters}.
In this paper, we investigate $d_1$ of the $(2,q)$-cabling $K_{2,q}$
of an arbitrary knot $K$ for an odd integer $q > 1$.
In particular, we give the following estimate of $d_1(K_{2,q})$. 
\begin{thm}
\label{thm1}
For any knot $K$ in $S^3$ and $k \in \mathbb{N}$, we have
$$
d_1(K_{2,4k \pm 1}) \leq -2k.
$$
Moreover, if $K$ bounds a null-homologous disk
in $\punc (n\overline{\mathbb{C}P^2})$ for some $n \in \mathbb{N} $,
then this inequality becomes equality.
\end{thm}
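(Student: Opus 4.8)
The plan is to translate the statement into one about the $(+1)$-surgery cobordism and a suitable embedded surface, and then to read off the bound from the grading shift of the associated cobordism map together with an adjunction-type (genus) inequality in $HF^+$. Throughout I will use the standard fact, which I take as known, that for a knot $J$ the invariant $d_1(J)=d(S^3_1(J))$ equals $-2V_0(J)$ for the local $h$-invariant $V_0(J)\ge 0$ extracted from $CFK^\infty(J)$; thus the theorem is equivalent to the two statements $V_0(K_{2,4k\pm1})\ge k$ for every $K$, and $V_0(K_{2,4k\pm1})\le k$ when $K$ bounds the prescribed disk. Recording the arithmetic $q=4k\pm1\Rightarrow (q-1)/2\in\{2k,2k-1\}$ at the outset is convenient, since the genus of the relevant cabling surface is governed by $(q-1)/2$.

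For the universal inequality $d_1(K_{2,q})\le -2k$ I would work with the $4$-manifold $W=\punc X\cup h$, where $X$ is a closed $4$-manifold in which $K$ bounds a smoothly embedded surface, and $h$ is a $(+1)$-framed $2$-handle attached along $K_{2,q}\subset S^3=\partial(\punc X)$, so that $\partial W=S^3_1(K_{2,q})$. Cabling a surface bounded by $K$ produces in $\punc X$ a surface with boundary $K_{2,q}$; capping it with the core of $h$ yields a closed surface $\widehat\Sigma\subset W$ of self-intersection $+1$ whose genus is controlled by $(q-1)/2$ and by the chosen surface for $K$. The key step is to feed $\widehat\Sigma$ into the grading-shift formula for the cobordism map of $W$ (after reversing orientation so that the relevant piece has $b_2^+=1$), combined with the adjunction relation in $HF^+$ forcing nonvanishing for the Spin$^c$ structures compatible with $\widehat\Sigma$; the resulting bound on the bottom grading of $HF^+(S^3_1(K_{2,q}))$ is exactly $-2k$. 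The point that makes the estimate independent of $K$ is that only the self-intersection $+1$ and the pattern contribution $(q-1)/2$ survive into the final inequality, while the companion's complexity is absorbed into the sign-indefinite part of the intersection form that the adjunction argument discards.

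For the equality clause, I would run the same construction with $X=n\overline{\mathbb{C}P^2}$ and with $\Sigma$ the cable of the hypothesized null-homologous disk $D\subset\punc(n\overline{\mathbb{C}P^2})$. Because $[D]=0$, the capped surface $\widehat\Sigma$ still has square $+1$ and pairs trivially with the $n$ negative-definite generators, so $W$ is, apart from the single positive class carrying $\widehat\Sigma$, negative definite; choosing the Spin$^c$ structure that is trivial on the $\overline{\mathbb{C}P^2}$-summands makes the adjunction inequality sharp. Matching this with the universal inequality forces $V_0(K_{2,q})=k$, hence $d_1(K_{2,q})=-2k$. The special role of negative knots enters only through the fact that they, and the broader class in the statement, do bound such a null-homologous disk using $\overline{\mathbb{C}P^2}$-blow-ups alone.

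The main obstacle, and the step I would spend the most care on, is the genus and intersection bookkeeping that turns the cabling of a bounding surface into the precise numerical bound $-2k$: one must verify that the square of $\widehat\Sigma$ is $+1$ and that its genus contributes exactly $(q-1)/2$ through the adjunction relation, and, for the equality half, that the null-homology of $D$ is precisely what is needed to make the companion's contribution vanish so that the two inequalities meet. A secondary technical point is establishing the adjunction/grading inequality for $d$ in the form required, for a $b_2^+=1$ filling containing a positive-square surface, since the naive Ozsv\'ath--Szab\'o definite-cobordism inequality is too weak to produce a negative upper bound on $d$; an equivalent and perhaps cleaner route is to carry out the whole argument algebraically on $CFK^\infty(K_{2,q})$ via Hedden's cabling formula, where the heuristic ``$(q-1)/2$ from the pattern, companion only increases $V_0$'' becomes a direct computation of the staircase controlling $V_0$.
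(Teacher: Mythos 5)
Your proposal has a genuine gap at its central step, and it also discards prematurely the tool that the paper actually uses. You assert that ``the naive Ozsv\'ath--Szab\'o definite-cobordism inequality is too weak to produce a negative upper bound on $d$,'' and you therefore route the argument through an unproved adjunction/grading inequality for a $b_2^+=1$ filling containing a positive-square surface. That inequality is never stated precisely, let alone established, and as described it cannot give what you want: any adjunction-type constraint coming from an embedded surface $\widehat\Sigma$ involves the genus of $\widehat\Sigma$, and the genus of your capped cabled surface grows with the genus of the surface chosen for the companion $K$. Your claim that ``the companion's complexity is absorbed into the sign-indefinite part of the intersection form'' is exactly the point that needs proof and is not true in any adjunction inequality I know; without it you do not get a bound independent of $K$, which is the whole content of the first half of the theorem. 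The fallback you mention (computing $V_0(K_{2,q})$ from $CFK^\infty$ via a cabling formula) is a different, also uncarried-out project, and is not obviously a ``direct computation'' for arbitrary companions.

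The paper's route shows that the definite-cobordism inequality is in fact sufficient, provided one builds a better filling than $B^4\cup(\text{$+1$-framed handle})$. The $2k$ full twists and the half twist in the $(2,4k\pm1)$-cable are resolved by hyperbolic (band) transformations into a genus-zero concordance from $K_{2,4k\pm1}$ to a $(2k+2)$-component link that bounds cores of the two $2$-handles of $S^2\times S^2$; this produces a disk $D\subset\punc(S^2\times S^2)$ with $\partial D=K_{2,4k\pm1}$, $[D]$ characteristic and $[D]^2=-8k$, \emph{independently of $K$} because the two strands of the cable are parallel copies of $K$ and contribute only through the hyperbolic pair. Attaching the $(+1)$-framed handle, summing with copies of $\mathbb{C}P^2$, and splitting off a $\overline{\mathbb{C}P^2}$ yields a simply-connected positive-definite Spin filling $W$ of $S^3_1(K_{2,4k\pm1})$ with $\beta_2=8k$; applying the Ozsv\'ath--Szab\'o inequality to $-W$ with the characteristic vector $\xi=0$ gives $8k\le -4d_1(K_{2,4k\pm1})$ at once. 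For the equality clause the paper does not use adjunction either: the null-homologous disk for $K$ is promoted to a null-homologous annulus cobounding $T_{2,4k\pm1}$ and $K_{2,4k\pm1}$, producing a negative-definite cobordism with form $n\langle-1\rangle$ from $S^3_1(T_{2,4k\pm1})$ to $S^3_1(K_{2,4k\pm1})$, and the same inequality applied to the characteristic vector $\sum_i\overline{\gamma}_i$ gives $d_1(K_{2,4k\pm1})\ge d_1(T_{2,4k\pm1})=-2k$. To repair your write-up you would either need to prove the $b_2^+=1$ adjunction statement with an explanation of why the companion's genus drops out, or, more realistically, replace that step with a construction of a definite filling along the lines above.
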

We note that $d_1$ of the $(2,4k \pm 1)$-torus knot $T_{2,4k \pm 1}$ is
equal to $-2k$ \cite{borodzik-nemethi, ozsvath-szabo2},
and hence Theorem $\ref{thm1}$ implies that $d_1(K_{2,q}) \leq d_1(O_{2,q})$,
where $O$ is the unknot.
If $q$ is an odd integer with $q \leq 1$, then from the Skein inequality \cite[Theorem 1.4]{peters}
we have $-2 \leq d_1(K_{2,q}) \leq 0$, and so
$d_1(K_{2,q})$ is either $-2$ or $0$.
In this paper, we focus on the case where $q >1$.

Next, we consider knots which bound null-homologous disks in 
$\punc (n \overline{{\Bbb C}P^2})$.
We first assume that $K$ is obtained from the unknot by a sequence of isotopies and
crossing changes from positive to negative as in Figure \ref{crossing change}.
In this case, since such crossing changes can be realized by attaching
4-dimensional $(-1)$-framed 2-handles to $S^3$ (giving rise to the 
$\overline{\mathbb{C}P^2}$ factors)
and handle slides (yielding the capping surface) as in Figure \ref{crossing change handle},
we have the desired disk with boundary $K$.
(The disk is null-homologous because it is obtained from the initial capping disk
of the unknot by adding, as a boundary connected sum, two copies of the core of each
2-handle with opposite sign.)
This implies that our knot class contains any negative knot.
On the other hand, if the Heegaard Floer $\tau$-invariant $\tau(K)$
of $K$ is more than 0, then $K$ cannot bound such a disk in $\punc (n\overline{\mathbb{C}P^2})$. This follows immediately from 
\cite[Theorem 1.1]{ozsvath-szabo3}.

\subsection{Comparison with the $\tau$-invariant}
The {\it Heegaard Floer $\tau$-invariant $\tau$} is
a knot concordance invariant defined by Ozsv\'{a}th-Szab\'{o} 
\cite{ozsvath-szabo3} and Rasmussen \cite{rasmussen}. 
In comparing the computation of $d_1$ and $\tau$,
Peters poses the following question.

\begin{question}[Peters \cite{peters}]
What is the  relation between $d_1(K)$ and $\tau(K)$?
Is it necessarily true that
$$
|d_1(K)| \leq 2|\tau(K)| ?
$$ 
\end{question}

Krcatovich \cite{krcatovich} has already given a negative answer to this question.
In fact, he showed that for any positive even integer $a$, there exists
a knot $K$ which satisfies $\tau(K) = 0$ and $|d_1(K)| = a$.
In this paper, we give a stronger negative answer.

\begin{thm}
\label{thm2}
For any even integers $a$ and $b$ with $a > b \geq 0$, there exist 
infinitely many knot concordance classes $\{ [K^n] \}_{n \in {\Bbb N}}$
such that for any $n \in {\Bbb N}$,
$$
|d_1(K^n)|= a \text{ and } 2|\tau(K^n)|=b.
$$
\end{thm}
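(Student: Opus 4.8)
The plan is to realize each $K^n$ as a $(2,q)$-cable $K^n=(J^n)_{2,q}$ of a suitably chosen companion $J^n$, and to read off the two invariants from two independent inputs: Theorem \ref{thm1} controls $d_1$ through the cabling parameter, while the cabling formula for $\tau$ controls $\tau$ through the companion. First I would fix the cabling parameter. Since $a$ is even and $a>b\geq 0$, the number $k:=a/2$ is a positive integer, and for either choice of sign we set $q=4k\pm 1=2a\pm 1$; note $q$ is an odd integer with $q>1$. If every $J^n$ lies in the class of knots bounding a null-homologous disk in $\punc(m\overline{\mathbb{C}P^2})$ for some $m$ — for instance if every $J^n$ is a negative knot — then the equality case of Theorem \ref{thm1} yields $d_1(K^n)=-2k=-a$, so that $|d_1(K^n)|=a$ for all $n$, independently of the remaining freedom in the construction.

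Next I would arrange $\tau$. Taking each $J^n$ negative gives $\tau(J^n)<0$ and $\epsilon(J^n)=-1$, so Hom's cabling formula specializes to $\tau(K^n)=2\tau(J^n)+\tfrac{q+1}{2}$. With $q=2a+1$ this is $\tau(K^n)=2\tau(J^n)+a+1$, which is odd, while with $q=2a-1$ it is $\tau(K^n)=2\tau(J^n)+a$, which is even. A short parity analysis then selects the sign and the target value $t:=\tau(J^n)$: when $b/2$ is even take $q=2a-1$ and solve $2t+a=b/2$; when $b/2$ is odd take $q=2a+1$ and solve $2t+a+1=b/2$. In both cases the solution $t$ is a negative integer because $0\leq b<a$, and one checks $2|\tau(K^n)|=b$. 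A companion realizing any prescribed negative value $t$ with $\epsilon=-1$ is already furnished by the negative torus knot $T_{2,2t-1}$, for which $\tau=t$ and $\epsilon=-1$; this produces a single knot with the two desired invariant values.

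It remains to upgrade this to infinitely many pairwise non-concordant $K^n$ while fixing all of the above data. Here I would separate the cables by a concordance invariant invisible to $\tau$ and $d_1$, namely the Levine--Tristram signature function: by Litherland's satellite formula, $\sigma_{(J)_{2,q}}(\omega)=\sigma_{T_{2,q}}(\omega)+\sigma_{J}(\omega^{2})$, so the signature function of a cable determines that of its companion. It therefore suffices to exhibit an infinite family of companions $J^n$, all with $\tau(J^n)=t$ and $\epsilon(J^n)=-1$ and all lying in the class of Theorem \ref{thm1}, but with pairwise distinct signature functions; their cables $K^n$ then have pairwise distinct signatures, hence are pairwise non-concordant, while $|d_1(K^n)|=a$ and $2|\tau(K^n)|=b$ hold for every $n$ by the first two steps. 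The natural way to build such a family is to set $J^n=T_{2,2t-1}\#W_n$, where the $W_n$ satisfy $\tau(W_n)=0$ and $\epsilon(W_n)\leq 0$ (so that $\tau(J^n)=t$ and $\epsilon(J^n)=-1$ by additivity of $\tau$ and Hom's behaviour of $\epsilon$ under connected sum), lie in the class, and have pairwise distinct signature functions.

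The main obstacle is precisely this last construction. For a negative knot $\tau$ equals minus the Seifert genus, so one cannot vary the companion within the negative torus knots without changing $\tau$; one is forced either to work at fixed genus among negative knots of that genus, or to introduce the summands $W_n$ above, which must simultaneously have vanishing $\tau$, non-positive $\epsilon$, nontrivial and mutually distinct signature functions, and must be verified to bound null-homologous disks in $\punc(m\overline{\mathbb{C}P^2})$ so that the equality clause of Theorem \ref{thm1} applies to $J^n$. (The excerpt's crossing-change description of the class is the tool I would use here, since an amphichiral knot unknottable by a single positive-to-negative crossing change lies in the class and has $\epsilon=0$.) Exhibiting such an infinite family, and checking each constraint on it, is the technical heart of the proof; once it is in hand, everything else reduces to the two explicit formulas above.
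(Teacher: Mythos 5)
Your first two steps coincide with the paper's argument: fix $k=a/2$, cable a negative companion, use the equality clause of Theorem \ref{thm1} to get $|d_1|=a$, and use Hedden's $\tau=-g$ together with Hom's formula with $\varepsilon=-1$ to solve for the companion's genus, with exactly the same parity split on $b/2$ deciding between $q=2a+1$ and $q=2a-1$. That part is fine. The genuine gap is the third step, which you yourself flag as "the technical heart" and leave unconstructed: you never exhibit the infinite family of companions with fixed $\tau$, fixed $\varepsilon$, membership in the disk-bounding class, and pairwise non-concordant cables. Without that family the theorem is not proved, and your proposed route to it (summands $W_n$ with $\tau=0$, $\varepsilon\le 0$, distinct signature functions, all bounding null-homologous disks in $\punc(m\overline{\mathbb{C}P^2})$) piles up constraints that each need verification --- in particular, producing such $W_n$ and checking they lie in the disk-bounding class is not easier than the problem you started with.

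The paper's resolution of exactly the difficulty you identified ("one cannot vary the companion within the negative torus knots without changing $\tau$") is simpler: it uses an explicit infinite family $\{K_n\}$ of \emph{genus-one} negative knots (Figure \ref{K_n}, with $\Delta_{K_n}(t)=nt-(2n-1)+nt^{-1}$), all having $\tau=-1$ and $\varepsilon=-1$, and forms $K^{m,n}=K_n\#K_{n+1}\#\cdots\#K_{n+m-1}$ to hit any prescribed genus $m$ while retaining an $n$-dependent Alexander polynomial. The resulting cables are then distinguished not by Levine--Tristram signatures but by the Fox--Milnor condition: $\Delta_{(K^{m,l})_{2,q}\#(K^{m,n})_{2,q}}(t)$ is not of the form $f(t)f(t^{-1})$ when $l\neq n$. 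Your signature-based separation would likely also work via Litherland's formula if you had the family in hand, but the missing family is precisely what must be supplied; I would encourage you to look for fixed-genus negative knots with varying Alexander polynomial (twist-knot-like examples) rather than trying to engineer $\tau=0$ correction summands.
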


This theorem follows from Theorem \ref{thm1} and the following theorem by Hom.
These two theorems give the following contrast between $d_1$ and $\tau$: 
for certain knots,
$\tau(K_{2,q})$ depends on the choice of $K$, while $d_1(K_{2,q})$ does not depend.

\begin{thm}[Hom \cite{hom}]
\label{thm.hom}
Let $K$ be a knot in $S^3$ and $p>1$. Then $\tau(K_{p,q})$
is determined in the following manner.
\begin{enumerate}
\item If $\varepsilon(K)=1$, then $\tau(K_{p,q}) = p\tau(K) + (p-1)(q-1)/2$.
\item If $\varepsilon(K)=-1$, then $\tau(K_{p,q}) = p\tau(K) + (p-1)(q+1)/2$.
\item If $\varepsilon(K)=0$, then $\tau(K_{p,q}) = \tau(T_{p,q})$.
\end{enumerate}
Here $\varepsilon(K) \in \set{0, \pm1}$ is a knot concordance invariant of $K$
defined in \cite{hom}.
\end{thm}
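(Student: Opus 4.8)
The plan is to compute the knot Floer complex of the cable via bordered Heegaard Floer homology and then extract $\tau$. Write $X_K = S^3 \setminus \nu(K)$ for the knot complement and let $V$ be the solid torus containing the $(p,q)$-cabling pattern $P$, so that gluing $V$ to $X_K$ along an appropriately framed torus recovers $(S^3, K_{p,q})$. By the pairing theorem of Lipshitz--Ozsv\'ath--Thurston, the filtered chain homotopy type of $\widehat{CFK}(K_{p,q})$ --- which is all we need in order to read off $\tau$ --- is computed by the box tensor product $\widehat{CFA}(V,P) \boxtimes \widehat{CFD}(X_K)$. Thus the theorem reduces to three computational inputs: (i) a description of $\widehat{CFD}(X_K)$ in terms of $CFK^\infty(K)$; (ii) an explicit computation of $\widehat{CFA}(V,P)$; and (iii) an analysis of the tensor product sufficient to locate the generator that realizes $\tau$.

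For (i), I would invoke the theorem expressing the type-$D$ module of a framed knot complement in terms of a vertically simplified basis $\{\xi_i\}$ and a horizontally simplified basis $\{\eta_i\}$ of $\widehat{CFK}(K)$: each vertical arrow of length $\ell$ contributes a chain of generators in the $\iota_0$-idempotent, each horizontal arrow contributes a chain in the $\iota_1$-idempotent, and a single unstable chain joins the distinguished generators $\xi_0$ and $\eta_0$ that realize $\tau(K)$ vertically and horizontally. The length and orientation of this unstable chain depend on the framing relative to $2\tau(K)$; choosing the framing dictated by the cabling is where the Alexander-grading shift $(p-1)(q \mp 1)/2$ in the final formula will ultimately originate.

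For (ii), I would draw a genus-one bordered Heegaard diagram for $P \subset V$ and read off $\widehat{CFA}(V,P)$ as a type-$A$ module over the torus algebra, retaining the extra basepoint so that the Alexander filtration on the cable is recorded. Forming the box tensor product in (iii), the generators organize according to how the pattern generators pair with the vertical chains, the horizontal chains, and the unstable chain of $\widehat{CFD}(X_K)$. Using the characterization of $\tau$ as the minimal Alexander filtration level at which the map to $\widehat{HF}(S^3)$ is surjective, I would show that exactly one homology class survives to carry the generator of $\widehat{HF}(S^3)$, and that its Alexander grading is the asserted value of $\tau(K_{p,q})$.

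The decisive point, and the main obstacle, is that the identity of this surviving generator --- and hence which of the three formulas holds --- is governed entirely by the local configuration of arrows at $\xi_0$ in $CFK^\infty(K)$, which is exactly what $\varepsilon(K)$ records: whether $\xi_0$ is the terminal endpoint of a horizontal arrow ($\varepsilon(K)=1$), its initial endpoint ($\varepsilon(K)=-1$), or horizontally isolated ($\varepsilon(K)=0$). The cleanest way to organize the computation is to establish a \emph{reduction lemma}: for the purpose of the cabling tensor product, $CFK^\infty(K)$ may be replaced by a model complex determined only by $\tau(K)$ and $\varepsilon(K)$ (a single generator when $\varepsilon(K)=0$, a short staircase when $\varepsilon(K)=\pm 1$), after which each case becomes a direct and finite computation; in particular the $\varepsilon(K)=0$ model coincides with that of the unknot complement, yielding $\tau(K_{p,q})=\tau(T_{p,q})$. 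The real work lies in the bordered bookkeeping behind this reduction: verifying that the relevant portion of the box tensor product is insensitive to the remaining arrows of $CFK^\infty(K)$, and that the surviving class is independent of the choices of simplified bases, so that the answer depends only on the concordance invariants $\tau(K)$ and $\varepsilon(K)$.
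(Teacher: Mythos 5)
This statement is not proved in the paper at all: it is Hom's cabling formula, quoted as an external input from \cite{hom} and used as a black box in the proof of Theorem \ref{thm2}. So there is no internal proof to compare against, and the only meaningful comparison is with Hom's own argument in \cite{hom}. Measured against that, your sketch reconstructs the correct strategy in outline: Hom does use the Lipshitz--Ozsv\'ath--Thurston pairing theorem, the description of $\widehat{CFD}$ of the framed knot complement in terms of vertically and horizontally simplified bases of $CFK^\infty(K)$ (vertical chains in one idempotent, horizontal chains in the other, plus the unstable chain whose shape depends on the framing relative to $2\tau(K)$), and the observation that the surviving generator of the box tensor product is governed by the local arrow configuration at the distinguished generators --- which is precisely the data recorded by $\varepsilon(K)$. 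Two points of divergence are worth noting. First, Hom does \emph{not} compute $\widehat{CFA}$ of the general $(p,q)$ pattern as you propose; she carries out the bordered computation only for the $(p,pn+1)$-cables, where the bordered diagram for the pattern is tractable, and then deduces the formula for arbitrary $q$ coprime to $p$ by Van Cott's interpolation argument, which pins down $\tau(K_{p,q})$ for the intermediate values of $q$ using the behavior of $\tau$ under crossing changes. Your direct-computation route is in principle viable but substantially heavier, and the $(p,pn+1)$-plus-interpolation scheme is the step your outline is missing if you intend to follow the literature. Second, your ``reduction lemma'' --- replacing $CFK^\infty(K)$ by a model complex determined by $\tau(K)$ and $\varepsilon(K)$ alone --- is stated more strongly than what Hom actually proves; she instead argues directly with simplified bases and verifies that only the unstable chain and the arrows adjacent to $\xi_0$, $\eta_0$ affect the relevant homology class. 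In particular, be aware that a basis that is \emph{simultaneously} vertically and horizontally simplified need not exist, which is exactly why this bookkeeping (which you correctly flag as ``the real work'') requires care; the full reduction statement you envision is essentially a local-equivalence claim that postdates \cite{hom}. As a blind reconstruction of the cited proof your proposal is faithful in spirit, but it is a program rather than a proof, with the interpolation step and the basis-existence subtlety as the concrete gaps to fill.
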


\begin{figure}[tbp]
\begin{center}
\includegraphics[scale = 0.8]{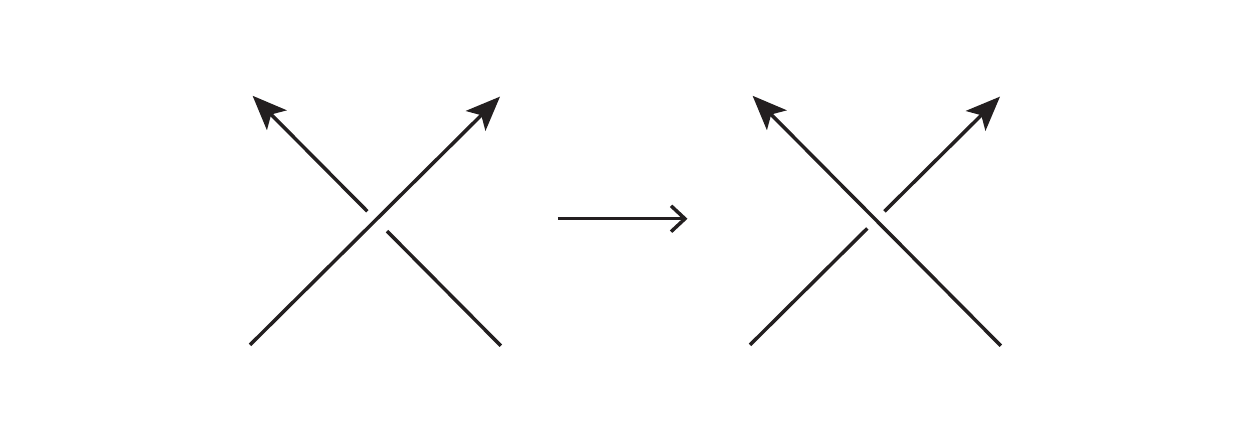}
\vspace{-8mm}
\caption{\label{crossing change}}
\includegraphics[ scale = 0.8]{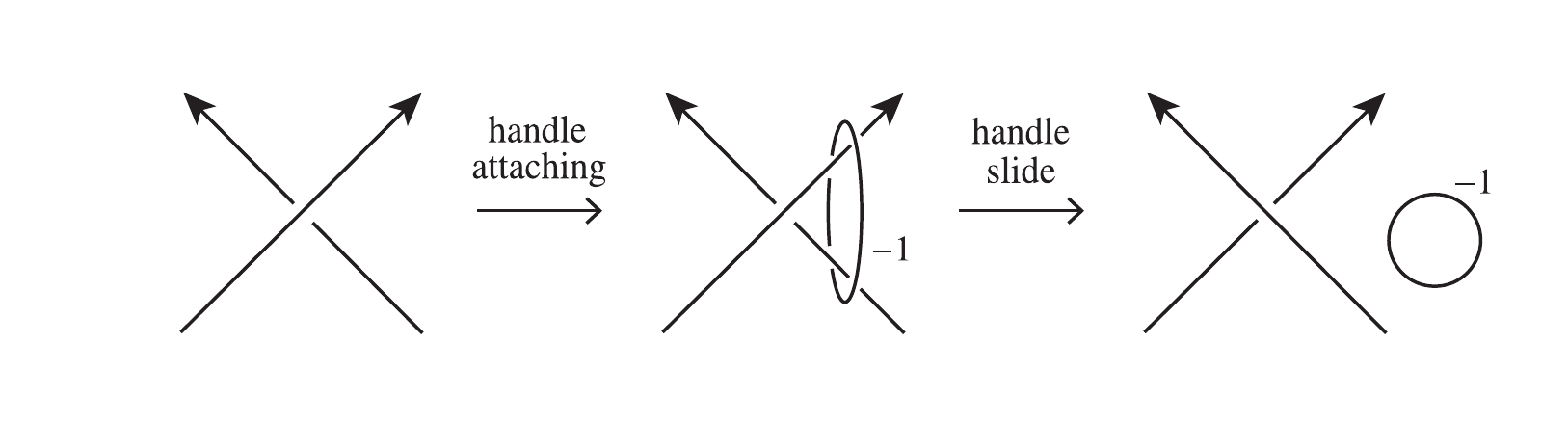}
\vspace{-12mm}
\caption{\label{crossing change handle}
}
\end{center}
\end{figure}

\subsection{An idea of proofs and another application}

In this subsection, we describe how we obtain an estimate of $d_1(K_{2,q})$.
We recall that if an integer homology 3-sphere $Y$ bounds a negative definite 4-manifold,
then $d(Y)$ satisfies the following inequality.
\begin{thm}[Ozsv\'{a}th-Szab\'{o} \cite{ozsvath-szabo}]
\label{thm.o-s}
Let $Y$ be an integer homology 3-sphere, then for each negative definite
4-manifold $X$ with boundary $Y$, we have the inequality
$$
Q_X(\xi, \xi) + \beta_2(X) \leq 4d(Y)
$$
for each characteristic vector $\xi$.
\end{thm}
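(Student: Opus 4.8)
The plan is to realize the inequality as a grading constraint produced by the four-dimensional cobordism maps on Heegaard Floer homology. First I would delete an open ball from the interior of $X$ to obtain a cobordism $W = \punc X$ from $S^3$ to $Y$, and fix a Spin$^c$ structure $\mathfrak{s}$ on $W$ with $c_1(\mathfrak{s}) = \xi$; since $Q_X$ is unimodular (as $Y$ is a homology sphere), every characteristic vector arises this way. The induced maps $F^+_{W,\mathfrak{s}}$ and $F^\infty_{W,\mathfrak{s}}$ on $HF^+$ and $HF^\infty$ are homogeneous of degree $(c_1(\mathfrak{s})^2 - 2\chi(W) - 3\sigma(W))/4$. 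A direct computation using $b_1(X)=0$ and negative-definiteness gives $\chi(W) = \beta_2(X)$ and $\sigma(W) = -\beta_2(X)$, so this degree shift equals $(Q_X(\xi,\xi) + \beta_2(X))/4$. Note that $\xi$ characteristic forces $Q_X(\xi,\xi) \equiv \sigma(X) = -\beta_2(X) \pmod 8$, so the shift is an even integer, consistent with the even gradings appearing below.

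The key input, and the step where negative-definiteness is genuinely used, is that $F^\infty_{W,\mathfrak{s}}$ is an isomorphism. For the integer homology spheres $S^3$ and $Y$, both $HF^\infty(S^3)$ and $HF^\infty(Y)$ are isomorphic to $\mathbb{Z}[U,U^{-1}]$ as graded modules, so $F^\infty_{W,\mathfrak{s}}$ is a grading-homogeneous, $U$-equivariant map between free rank-one $\mathbb{Z}[U,U^{-1}]$-modules. I would establish that it is nonzero, which then forces it to be an isomorphism; this nontriviality relies on the fact that $HF^\infty$ and its cobordism maps are governed by the cup-product structure on cohomology, on which a $b_2^+(W)=0$ cobordism map acts nontrivially. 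I expect this to be the main obstacle: once $b_2^+(W) > 0$ the corresponding $HF^\infty$ map vanishes and the whole argument collapses, so the negative-definite hypothesis is exactly what keeps the map an isomorphism. Concretely I would handle it by decomposing $W$ into elementary handle-attachment cobordisms and tracking the maps on $HF^\infty$, using that no positive part of the intersection form is ever introduced.

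With the isomorphism in hand, I would combine it with the square relating $F^\infty_{W,\mathfrak{s}}$ and $F^+_{W,\mathfrak{s}}$ through the natural projection $\pi \colon HF^\infty \to HF^+$, which the cobordism maps commute with. Recall that $d(Y)$ is the minimal grading of a nonzero element in $\operatorname{Im}(\pi)$ on $Y$, and that for $S^3$ this image is the tower $\mathcal{T}^+$ beginning in grading $0$, so $d(S^3)=0$. Let $\widetilde{1}_Y \in HF^\infty(Y)$ be a lift of the bottom element of the $Y$-tower, which sits in grading $d(Y)$, and set $z = (F^\infty_{W,\mathfrak{s}})^{-1}(\widetilde{1}_Y)$; then $z$ lies in grading $d(Y) - (Q_X(\xi,\xi)+\beta_2(X))/4$. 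By commutativity, $F^+_{W,\mathfrak{s}}(\pi(z)) = \pi(\widetilde{1}_Y) \neq 0$, hence $\pi(z) \neq 0$ in $HF^+(S^3)$. Since every nonzero element of the $S^3$-tower has nonnegative grading, we get $d(Y) - (Q_X(\xi,\xi)+\beta_2(X))/4 \geq 0$, which is exactly $Q_X(\xi,\xi) + \beta_2(X) \leq 4d(Y)$.

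Everything outside the isomorphism claim is bookkeeping with Euler characteristics, signatures, and absolute gradings, so the crux of the proof is the $HF^\infty$ computation in the second paragraph.
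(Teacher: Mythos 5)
The paper does not prove this theorem: it is quoted verbatim from \cite{ozsvath-szabo} and used as a black box, so there is no internal proof to compare against. Your outline does reproduce the argument from that reference (puncture $X$, compute the degree shift of the cobordism maps, use that $F^\infty_{W,\mathfrak{s}}$ is an isomorphism, and compare the bottoms of the towers via $\pi\colon HF^\infty\to HF^+$), and the grading bookkeeping in your first and third paragraphs is correct. Two points keep this from being a complete proof. First, you silently assume $b_1(X)=0$; the statement does not, and one must first surger out a generating set of loops for $H_1(X)/\mathrm{Tors}$ (which changes neither $\partial X$ nor $Q_X$) before $\chi(\punc X)=\beta_2(X)$ and the handle-decomposition argument apply. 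Second, and more substantively, essentially all of the content of the theorem sits inside the claim that $F^\infty_{W,\mathfrak{s}}$ is an isomorphism when $b_1(W)=b_2^+(W)=0$; your appeal to ``the cup-product structure on cohomology'' is a heuristic rather than an argument. In \cite{ozsvath-szabo} this is a separate proposition whose proof for the two-handle cobordisms goes through the surgery exact triangle and the standard structure of $HF^\infty$ for rational homology spheres; without supplying that input, your concluding paragraph is correct but conditional. So the proposal identifies the right strategy and correctly isolates the crux, but the crux itself is asserted, not proved.
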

Here a 4-manifold $X$ is called {\it negative (resp.\ positive) definite}
if the intersection form $Q_X$ of $X$ is negative (resp.\ positive) definite.
Moreover, $\xi \in H_2(X; \mathbb{Z})$ is called a {\it characteristic vector}
if $\xi$ satisfies $Q_X(\xi , v) \equiv Q_X(v,v) \mod 2$
for any $v \in H_2(X; \mathbb{Z})$,
and
$\beta_i$ denotes the $i$-th Betti number.

Let us also recall the definition of $d$.
For a rational homology 3-sphere $Y$ with Spin$^c$ structure $\frak{t}$,
the Heegaard Floer homologies $HF^*(Y, \frak{t})$ ($* = +,-,\infty$) are defined
as an absolute $\mathbb{Q}$-graded $\mathbb{Z}[U]$-modules,
where the action of $U$ decreases the grading by $2$.
These homology groups are related to one another by an exact
sequence:
$$\cdots \to HF_{\bullet}^-(Y,\frak{t})\to HF_{\bullet}^\infty(Y,\frak{t})\overset{\pi}{\to} HF_{\bullet}^+(Y,\frak{t})\to HF_{\bullet-1}^-(Y,\frak{t})\to \cdots$$
Then $\pi (HF^\infty(Y,\frak{t})) \subset HF^+(Y,\frak{t})$ is isomorphic to 
$\mathbb{Z}[U,U^{-1}]/U \cdot \mathbb{Z}[U]$, and so
we can define $d(Y, \frak{t})$ to be the minimal grading of $\pi (HF^\infty(Y,\frak{t}))$.
 
In order to prove our main results, we only use Theorem \ref{thm.o-s} without
investigation of any Heegaard Floer homology groups.
Indeed, the following theorem plays an essential role in this paper.
\begin{thm}
\label{thm3}
For any knot $K$ and positive integer $k$, 
there exists a 4-manifold $W$ which satisfies
\begin{enumerate}
\item $W$ is a simply-connected Spin manifold, 
\item $\partial W = S^3_1\left( K_{2,4k \pm 1} \right)$, and 
\item $\beta_2(W)=\beta^+_2(W)= 8k$. In pariticular, $W$ is positive definite.
\end{enumerate}
\end{thm}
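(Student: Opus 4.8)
The plan is to exhibit $W$ as an explicit even, positive-definite $2$-handlebody presented by a framed link, and then to identify its boundary with $S^3_1(K_{2,4k\pm1})$ by Kirby calculus. The guiding case is $k=1$ with $K$ the unknot $O$: here $O_{2,3}=T_{2,3}$, and $S^3_1(T_{2,3})=-\Sigma(2,3,5)$ bounds the positive-definite $E_8$-plumbing, which is simply-connected and Spin with $\beta_2=\beta_2^{+}=8$. Thus $W$ should be understood as a generalization of this $E_8$-filling of the Poincar\'e sphere, with one ``$E_8$-block'' for each unit increment of $k$, accounting for the target value $\beta_2=8k$. I note in passing that, once $W$ is produced, Theorem \ref{thm1} follows immediately: $-W$ is negative-definite Spin, so applying Theorem \ref{thm.o-s} to $-W$ with the characteristic vector $\xi=0$ (legitimate because an even form has $0$ as a characteristic vector) gives $8k\le -4d_1(K_{2,4k\pm1})$.

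To construct $W$, I would start from the standard picture of the cable: draw $K_{2,q}$ as two parallel $0$-framed copies of $K$ joined through a fixed cabling tangle (a clasp together with a number of full twists determined by $q$), and put the surgery framing $+1$ on the resulting knot. This presents $S^3_1(K_{2,q})$ as surgery on a link in which the knot type of $K$ enters only through its two parallel strands, while all of the remaining linking information lives in a $K$-independent tangle. I would then perform a sequence of blow-ups and handle slides, supported in this tangle, converting the linking matrix into an even, positive-definite form of rank $8k$ (the generalized $E_8$ pattern), and take $W$ to be the resulting $2$-handlebody. Evenness of every framing makes $W$ Spin; the form is positive definite with $\beta_2=\beta_2^{+}=8k$ by design; and simple-connectivity follows since $W$ is built on $B^4$ by $2$-handles whose attaching circles normally generate $\pi_1$. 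The conceptual point, which explains why the estimate is insensitive to $K$, is that every component ever introduced is an unknot with $K$-independent linking numbers, and handle slides only recombine these; hence the isomorphism type of the intersection form of $W$ is literally the same as in the $K=O$ computation, even though the boundary $3$-manifold of course still depends on $K$.

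The main obstacle is exactly this Kirby-calculus identification of the boundary: producing, uniformly in $K$ and for both families $q=4k-1$ and $q=4k+1$, an explicit sequence of moves that turns the $+1$-framed cable into the even, positive-definite, rank-$8k$ diagram. The delicate feature is the parity. The naive fillings obtained by blowing up the $+1$-framed $2$-handle on $K_{2,q}$ stay odd---already for the trefoil, that construction yields the diagonal form $8\langle1\rangle$, which is not isomorphic to $E_8$---so the Spin filling is genuinely a different manifold and must be built directly, with blow-downs as well as blow-ups used along the way to fix the boundary. Getting the evenness, the positive-definiteness, and the exact count $8k$ to hold simultaneously, while keeping $K$ present only through $K$-independent linking data, is where essentially all the work lies; the simple-connectivity check is then routine.
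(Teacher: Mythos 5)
Your proposal correctly identifies the model case (the positive-definite $E_8$-filling of $S^3_1(T_{2,3})=-\Sigma(2,3,5)$), the role the theorem plays in deducing Theorem \ref{thm1} via $\xi=0$, and the central difficulty (parity: naive blow-ups of the $+1$-framed cable produce odd forms). But the proof itself is not there. The entire content of the theorem is the existence of the sequence of moves you describe only in outline --- ``perform a sequence of blow-ups and handle slides, supported in this tangle, converting the linking matrix into an even, positive-definite form of rank $8k$'' --- and you concede in your final paragraph that this is ``where essentially all the work lies.'' A description of the desired output of a Kirby-calculus computation is not a construction of it, and nothing in the proposal makes it plausible that such a $K$-independent, parity-correct sequence exists for all $K$ and both families $q=4k\pm1$ simultaneously. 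As written, this is a plan with the key step missing.

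For comparison, the paper does not produce the Spin filling by diagram manipulation at all, and the evenness is not achieved by arranging even framings. Instead (Lemmas \ref{lem1} and \ref{lem2}): one first builds a genus-zero cobordism in $S^3\times[0,1]$, via band moves (``hyperbolic transformations'') applied to the $2k$ full twists and the $\pm1$ half twist of the cabling region, from $K_{2,4k\pm1}$ to a link $L$ that bounds $2k+2$ parallel cores of the two $2$-handles of a standard handle picture of $S^2\times S^2$; gluing gives a disk $D\subset\punc(S^2\times S^2)$ with $\partial D=K_{2,4k\pm1}$, $[D]^2=-8k$, and $[D]$ characteristic. Then one attaches the $(+1)$-framed $2$-handle along $K_{2,4k\pm1}$ to get a characteristic sphere of square $-8k+1$, connect-sums with $8k-2$ copies of $(\mathbb{C}P^2,\mathbb{C}P^1)$ to reduce that characteristic sphere to square $-1$, and splits off a $\overline{\mathbb{C}P^2}$ along it; the complement is automatically even (hence Spin, being simply connected) precisely because the sphere blown down was characteristic, and a Betti-number count gives $\beta_2=\beta_2^+=8k$. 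This is the mechanism that resolves the parity problem you flagged, and it is absent from your proposal. If you want to complete your approach, you would either have to exhibit the explicit even rank-$8k$ link diagrams uniformly in $K$, or replace that step by an argument of this characteristic-surface type.
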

Here $\beta^+_2(W)$ (resp.\ $\beta^-_2(W)$) denotes the number
of positive (resp.\ negative) eigenvalues of $Q_W$.
Theorem \ref{thm3} implies that for any knot $K$, $-S^3_1(K_{2, 4k\pm1})$ bounds a
negative definite Spin 4-manifold $-W$, and we obtain 
the inequality of Theorem \ref{thm1} by
applying Theorem \ref{thm.o-s} to the pair $(-W, -S^3_1(K_{2,4k \pm 1}))$.

We also mention that Tange \cite{tange} investigated 
which Brieskorn homology spheres bound a definite Spin 4-manifold, 
and also asked which integer homology 3-spheres bound a definite Spin 4-manifold.
Theorem \ref{thm3} gives a new construction of such 3-manifolds.
Furthermore,  for any knot $K$ which bounds a null-homologous disk in
$\punc (n\overline{\mathbb{C}P^2})$ for some $n$,
Theorems\ \ref{thm1}, \ref{thm.o-s} and \ref{thm3} let us determine 
the value of $\frak{ds}(S^3_1(K_{2,q}))$,
where $\frak{ds}$ is an $h$-cobordism invariant of integer homology 3-spheres defined by Tange \cite{tange} as follows:
$$
\frak{ds}(Y) = \max \Set{ \frac{\beta_2(X)}{8} | 
\begin{array}{l}
\partial X =Y\\
H_0(X)= \mathbb{Z}, H_1(X)=0 \\
\beta_2(X)=  \left| \sigma(X) \right|,  w_2(X) = 0 
\end{array}
}.
$$
\begin{cor}
\label{cor1}
If $K$ bounds a null-homologous disk in $\punc(n\overline{\mathbb{C}P^2})$, then 
$$\frak{ds}(S^3_1(K_{2, 4k \pm 1})) = k.$$
\end{cor}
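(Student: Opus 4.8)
The plan is to prove the two inequalities $\mathfrak{ds}(S^3_1(K_{2,4k\pm1})) \geq k$ and $\mathfrak{ds}(S^3_1(K_{2,4k\pm1})) \leq k$ separately; throughout write $Y := S^3_1(K_{2,4k\pm1})$, which is an integer homology 3-sphere. The lower bound is immediate and in fact needs no hypothesis on $K$: the manifold $W$ produced by Theorem \ref{thm3} is simply-connected (so $H_0(W)=\mathbb{Z}$ and $H_1(W)=0$), Spin (so $w_2(W)=0$), and satisfies $\beta_2(W)=\beta_2^+(W)=8k$, which forces $\beta_2^-(W)=0$ and hence $\beta_2(W)=|\sigma(W)|=8k$. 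Thus $W$ is an admissible competitor in the definition of $\mathfrak{ds}$, giving $\mathfrak{ds}(Y)\geq \beta_2(W)/8 = k$.

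For the upper bound I would take an arbitrary admissible $X$, that is, a 4-manifold with $\partial X = Y$, $H_0(X)=\mathbb{Z}$, $H_1(X)=0$, $\beta_2(X)=|\sigma(X)|$ and $w_2(X)=0$, and bound $\beta_2(X)/8$ by $k$. The condition $\beta_2(X)=|\sigma(X)|$ forces $X$ to be definite (either $\beta_2^+(X)=0$ or $\beta_2^-(X)=0$), and since $w_2(X)=0$ the intersection form $Q_X$ is even, so $\xi=0$ is a characteristic vector. I then treat the two definiteness cases. If $X$ is negative definite, Theorem \ref{thm.o-s} applied to $X$ with $\xi=0$ yields $\beta_2(X)\leq 4d(Y)$. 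But the hypothesis that $K$ bounds a null-homologous disk in $\punc (n\overline{\mathbb{C}P^2})$ activates the equality clause of Theorem \ref{thm1}, namely $d(Y)=d_1(K_{2,4k\pm1})=-2k$, whence $\beta_2(X)\leq -8k<0$, which is absurd; hence no negative definite competitor exists.

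If instead $X$ is positive definite, then $-X$ is negative definite with $\partial(-X)=-Y$, and $-X$ still satisfies the homological conditions and is Spin, so $0$ is again a characteristic vector. Since $d$ is a group homomorphism on $\theta^c$ and orientation reversal is the inverse operation, $d(-Y)=-d(Y)=2k$. Applying Theorem \ref{thm.o-s} to $-X$ with $\xi=0$ gives $\beta_2(X)=\beta_2(-X)\leq 4d(-Y)=8k$, i.e. $\beta_2(X)/8\leq k$. Combining the two cases, every admissible $X$ satisfies $\beta_2(X)/8\leq k$, so $\mathfrak{ds}(Y)\leq k$, and together with the lower bound we conclude $\mathfrak{ds}(Y)=k$.

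The argument is essentially a packaging of results already in hand, so the step I would check most carefully is the orientation bookkeeping: that $d$ changes sign under orientation reversal (equivalently, that $[-Y]$ is the inverse of $[Y]$ in $\theta^c$), and that replacing a positive definite filling $X$ by the negative definite $-X$ is precisely what makes Theorem \ref{thm.o-s}, stated only for negative definite fillings, applicable. One should also confirm that $-X$ retains the homological hypotheses and that $0$ remains characteristic, but these are formal and I do not expect any genuine obstacle.
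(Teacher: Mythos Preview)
Your proof is correct and follows the same strategy as the paper: the lower bound via Theorem~\ref{thm3} is identical, and for the upper bound the paper simply cites Tange's inequality $\frak{ds}(Y)\leq |d(Y)|/2$ (from \cite[Theorem~2.1(9)]{tange}) together with Theorem~\ref{thm1}, whereas you reprove that inequality directly by applying Theorem~\ref{thm.o-s} with $\xi=0$ to an arbitrary admissible (hence definite, Spin) filling. The orientation bookkeeping you flag is exactly what underlies Tange's bound, so there is no substantive difference between the two arguments.
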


In addition, we also give the following result.

\begin{prop}
\label{cor2}
For any positive integer $k$, 
there exist infinitely many irreducible integer homology 3-spheres
whose $\frak{ds}$ value is $k$.
\end{prop}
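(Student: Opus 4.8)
The plan is to produce, for each fixed $k$, an explicit infinite family of integer homology 3-spheres of the form $Y_m = S^3_1((K_m)_{2,4k\pm1})$ to which Corollary \ref{cor1} already applies, and then to verify that the members are irreducible and pairwise non-homeomorphic. For the companion knots I would take the negative torus knots $K_m = T_{2,-(2m+1)}$ ($m \in \mathbb{N}$). Each $K_m$ is a negative knot, hence bounds a null-homologous disk in $\punc(n\overline{\mathbb{C}P^2})$ for some $n$ by the discussion preceding Theorem \ref{thm1}; so Corollary \ref{cor1} immediately gives $\frak{ds}(Y_m) = k$ for every $m$, at no extra cost. Thus the entire content of the proposition lies in irreducibility and in the infinitude of homeomorphism types.

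To see that each $Y_m$ is irreducible, I would use that $(K_m)_{2,4k\pm1}$ is the $(2,4k\pm1)$-cable of the \emph{nontrivial} knot $K_m$, together with the standard analysis of Dehn surgery along cable (more generally satellite) knots due to Gordon. The companion torus $T = \partial N(K_m)$ is incompressible in the exterior $E(K_m)$ since $K_m$ is nontrivial, and after filling the cable space along a slope different from the cabling slope $2(4k\pm1)$ the adjacent piece is a Seifert fibered space over the disk with two exceptional fibers, in which $T$ remains incompressible. Because our surgery coefficient $1$ is different from $2(4k\pm1) \geq 6$, the torus $T$ survives as an incompressible torus in $Y_m$, and gluing the two irreducible pieces $E(K_m)$ and the filled cable space along the incompressible $T$ yields an irreducible $Y_m$.

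To see that the $Y_m$ realize infinitely many homeomorphism types, I would distinguish them by the Casson invariant $\lambda$. Using the surgery formula $\lambda(S^3_1(J)) = \tfrac{1}{2}\Delta_J''(1)$ together with the cabling formula $\Delta_{J_{2,q}}(t) = \Delta_J(t^2)\,\Delta_{T_{2,q}}(t)$, a short computation (using $\Delta_{K_m}(1)=1$ and $\Delta_{K_m}'(1)=0$) gives
$$
\Delta_{(K_m)_{2,4k\pm1}}''(1) = 4\,\Delta_{K_m}''(1) + \Delta_{T_{2,4k\pm1}}''(1).
$$
Since $\Delta_{T_{2,2m+1}}''(1)$ is strictly increasing in $m$ and mirroring leaves the Alexander polynomial unchanged, $\lambda(Y_m)$ is strictly monotone in $m$; hence the $Y_m$ are pairwise non-homeomorphic, producing infinitely many classes. (Alternatively, one could take hyperbolic negative companions and distinguish the $Y_m$ through the hyperbolic JSJ piece via Gordon--Luecke, but the Casson computation is cleaner to carry out.)

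The step I expect to be the main obstacle is irreducibility: one must be certain that the companion torus of the cable is genuinely essential after this \emph{specific} integer surgery, i.e.\ that the coefficient $1$ avoids every degenerate cable slope and does not let the filled cable space degenerate to a solid torus (which would allow $T$ to compress). Once the relevant cable-surgery facts are invoked, the Alexander-polynomial and Casson-invariant computations are routine, and the value $\frak{ds}=k$ is handed to us directly by Corollary \ref{cor1}.
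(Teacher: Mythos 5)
Your proposal is correct, and its overall skeleton coincides with the paper's: produce an infinite family of companion knots bounding null-homologous disks in $\punc(n\overline{\mathbb{C}P^2})$, invoke Corollary \ref{cor1} to get $\frak{ds}=k$, distinguish the resulting manifolds by the Casson surgery formula $\lambda(S^3_1(J))=\tfrac12\Delta''_J(1)$ together with the cabling identity $\Delta''_{J_{2,q}}(1)=4\Delta''_J(1)+\Delta''_{T_{2,q}}(1)$, and then prove irreducibility. The two genuine differences are these. First, the paper uses an explicit family $K_n$ with $\Delta_{K_n}(t)=nt-(2n-1)+nt^{-1}$ and exhibits the null-homologous disk by a concrete cobordism picture, whereas you take the negative torus knots $T_{2,-(2m+1)}$ and appeal to the general fact that negative knots lie in the relevant class; both work, and your Casson computation (using $\Delta''_{T_{2,2m+1}}(1)=m(m+1)$, strictly increasing) is just as clean as the paper's $\Delta''_{K_n}(1)=2n$. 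Second, and more substantively, the irreducibility arguments diverge: the paper bounds the tunnel number of $(K_n)_{2,4k+1}$ by $2$, hence the Heegaard genus of the surgery by $3$, and uses additivity of Heegaard genus to force any nontrivial connected summand to have genus $1$ and therefore to be $S^3$ (being a summand of a homology sphere); you instead use the satellite structure, observing that since the surgery slope $1$ differs from the cabling slope $2(4k\pm1)$ and $|1-2(4k\pm1)|\geq 5>1$, the filled cable space is a Seifert fibered space over the disk with two genuine exceptional fibers, so the companion torus stays incompressible and $Y_m$ is a union of two irreducible pieces along an essential torus. Your route requires the companion to be nontrivial (so it would not apply verbatim to the paper's family if one of the $K_n$ were unknotted, and needs the standard cable-surgery facts from Gordon), while the paper's tunnel-number argument is more elementary and tailored to its specific diagrams; conversely, your argument gives the extra information that $Y_m$ is toroidal. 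Both are complete proofs of the proposition.
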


\subsection{Further questions}
It is natural to ask that if 
$K$ does not bound a null-homologous disk in $\punc(n\overline{\mathbb{C}P^2})$
for any $n$, then what are the possible $d_1(K_{2,q})$ values. 
As a case study on this question, we compute $d_1$ of
the $(2, 2pq \pm 1 )$-cabling of the $(p,q)$-torus knot for $p,q>0$,
which does not bound a null-homologous disk in $\punc(n\overline{\mathbb{C}P^2})$
for any $n$.
\begin{prop}
\label{prop}
For any positive coprime integers $p$ and $q$,  we have 
$$
d_1((T_{p,q})_{2, 2pq \pm 1}) = d_1(T_{2, 2pq \pm 1}).
$$
\end{prop}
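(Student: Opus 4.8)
The plan is to keep the upper bound supplied by Theorem~\ref{thm1} and to match it with a lower bound coming from an exact computation of the correction term. We may assume $p,q\ge 2$, since otherwise $T_{p,q}$ is the unknot and there is nothing to prove. Then $2pq\pm1$ is an odd integer greater than $1$, hence of the form $4k\pm1$, so Theorem~\ref{thm1}, together with the observation recorded after it that $d_1(K_{2,m})\le d_1(T_{2,m})$ for every knot $K$, gives at once
$$d_1\bigl((T_{p,q})_{2,2pq\pm1}\bigr)\le d_1\bigl(T_{2,2pq\pm1}\bigr).$$
The content of the proposition is the reverse inequality. Note that the equality clause of Theorem~\ref{thm1} is of no help here: by Theorem~\ref{thm.hom} (or directly) $\tau(T_{p,q})=(p-1)(q-1)/2>0$, so $T_{p,q}$ bounds no null-homologous disk in any $\punc(n\overline{\mathbb{C}P^2})$. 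I would therefore prove equality by computing $d_1$ of the cable outright.

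The key structural input is that $(T_{p,q})_{2,2pq\pm1}$ is itself an L-space knot. By the cabling criterion of Hedden and Hom, the $(2,b)$-cable of an L-space knot $J$ is an L-space knot exactly when $b\ge 2\bigl(2g(J)-1\bigr)$; taking $J=T_{p,q}$ and $b=2pq\pm1$ this reads $2pq\pm1\ge 2pq-2p-2q$, which always holds. I would then invoke the rational surgery formula for the correction term (Ni--Wu, building on Rasmussen and Ozsv\'{a}th--Szab\'{o}), which gives $d_1(K)=-2V_0(K)$ for every knot, so the proposition reduces to the single identity
$$V_0\bigl((T_{p,q})_{2,2pq\pm1}\bigr)=V_0\bigl(T_{2,2pq\pm1}\bigr).$$
For an L-space knot $V_0$ is read off the symmetrized Alexander polynomial as the torsion coefficient $V_0=t_0=\sum_{j\ge 1}j\,a_j$, where $\Delta=\sum_j a_j t^j$, and the cabling formula $\Delta_{(T_{p,q})_{2,b}}(t)=\Delta_{T_{p,q}}(t^2)\,\Delta_{T_{2,b}}(t)$ turns the whole statement into an assertion about these two torus-knot Alexander polynomials.

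Writing $\Delta_{T_{p,q}}=\sum_m b_m t^m$ and $\Delta_{T_{2,b}}=\sum_l e_l t^l$, the symmetries $b_{-m}=b_m$, $e_{-l}=e_l$ and the normalizations $\sum_m b_m=\sum_l e_l=1$ let me rewrite the desired equality $t_0(\mathrm{cable})=t_0(T_{2,b})$ as
$$\sum_{m,l}\bigl(|2m+l|-|l|\bigr)\,b_m e_l=0.$$
The crucial simplification is that $b_m$ is supported in $|m|\le g_1:=g(T_{p,q})$, so as soon as $|l|\ge 2g_1$ one has $|2m+l|-|l|=\operatorname{sign}(l)\,2m$ for every contributing $m$, and the inner sum vanishes because $\sum_m m\,b_m=0$. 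Only the finitely many indices $|l|<2g_1=(p-1)(q-1)$ survive, and there $e_l=(-1)^{n+l}$ with $n=(b-1)/2$ is an explicit alternating sign. The proposition is thus reduced to the single finite identity $\phi(0)+2\sum_{l=1}^{2g_1-1}(-1)^l\phi(l)=0$, where $\phi(l)=\sum_m\bigl(|2m+l|-|l|\bigr)b_m$; I have verified this directly in small cases such as $p=2,q=3$ for both signs, where it holds.

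Carrying out this finite alternating sum is the main obstacle, since it requires enough control of the coefficients $b_m$ of $\Delta_{T_{p,q}}$ on the window $|m|<g_1$. For the $+$ sign there is a cleaner route: $(T_{p,q})_{2,2pq+1}$ is then a genuine algebraic knot with semigroup $\langle 2p,2q,2pq+1\rangle$, $V_0$ is given by a count of the semigroup gaps (those lying at or above the genus), and this can be compared gap-by-gap with the semigroup $\langle 2,2pq+1\rangle$ of $T_{2,2pq+1}$. The delicacy is that for the $-$ sign the cabling slope $2pq-1$ lies just below the algebraicity threshold $2pq$, so $(T_{p,q})_{2,2pq-1}$ is an L-space knot but \emph{not} an algebraic knot; no honest numerical semigroup is available and one must run the Alexander-polynomial argument uniformly in both signs. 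Reconciling the two signs through the finite identity above is where the real work lies; the reductions leading to it are formal.
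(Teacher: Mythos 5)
Your overall strategy is in substance the paper's: both arguments reduce $d_1$ of the cable to the torsion coefficient $t_0(\Delta)=\sum_{j\ge1}j\,a_j$ of its Alexander polynomial and then exploit the cabling formula $\Delta_{(T_{p,q})_{2,b}}(t)=\Delta_{T_{p,q}}(t^2)\,\Delta_{T_{2,b}}(t)$. (The paper obtains $d_1=-2t_0$ from the Bleiler--Litherland lens space surgery along the cable together with \cite[Proposition 8.1]{ozsvath-szabo}, rather than from the L-space cabling criterion and the $V_0$-formula; either justification is fine, and a knot with a lens space surgery is in particular an L-space knot, so the difference is cosmetic. Note also that the upper bound from Theorem \ref{thm1} is then not needed at all, since the computation gives equality outright.) The problem is that you stop exactly where the proof has to be carried out: the finite identity $\phi(0)+2\sum_{l=1}^{2g_1-1}(-1)^l\phi(l)=0$ is only verified in examples and is explicitly declared to be ``where the real work lies.'' That identity \emph{is} the content of the proposition, so as written the proposal is an outline with a genuine gap at its center, not a proof.

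The gap is fillable, and more easily than you fear: no control of the coefficients $b_m$ of $\Delta_{T_{p,q}}$ is needed beyond the support bound $|m|\le g_1$ and the symmetry $b_{-m}=b_m$. Indeed, for each \emph{fixed} $m$ one already has $\sum_{l=-n}^{n}(-1)^{n+l}\bigl(|2m+l|-|l|\bigr)=0$ whenever $n\ge 2g_1\ge 2|m|$: reindexing $l\mapsto l-2m$ in the first term leaves two boundary blocks of $2|m|$ consecutive alternating values of $|l|$, each summing to $|m|$, and they cancel. Summing against $b_m$ gives your identity for both signs of $2pq\pm1$ at once, with no case distinction and no semigroup input (your algebraic-knot detour for the $+$ sign is unnecessary, and, as you observe, it cannot handle the $-$ sign anyway). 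This is what the paper's Lemma \ref{lem3} accomplishes in slightly different bookkeeping: it evaluates $t_0(f(t)\cdot T_k)$ for an arbitrary symmetric $f$ with $f(1)=1$, and the alternating sum over $k$ then collapses using only $f(1)=1$ and the fact that $\Delta_{T_{p,q}}(t^2)$ has no odd-degree terms. You need to supply an argument of this kind (or reprove Lemma \ref{lem3}); until then the decisive step is missing.
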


This result is derived from \cite[Proposition 8.1]{ozsvath-szabo} and the fact that 
$(4pq \pm 1)$-surgery along $(T_{p,q})_{2,2pq \pm1}$ gives a lens space \cite{bleiler-litherland}. 
Based on this proposition and Theorem \ref{thm1}, we suggest the following question.

\begin{question}
Does the equality $d_1(K_{2,q}) = d_1(T_{2,q})$ hold for all knots $K$?
\end{question}

Finally we suggest a question related to Proposition \ref{cor2}.
While $\frak{ds}$ is an $h$-cobordism invariant, 
we will only prove that those infinitely many integer homology 3-spheres in
Proposition \ref{cor2} are not diffeomorphic to one another.
Whether they are $h$-cobordant or not remains open.
More generally, we suggest the following question.

\begin{question}
Is it true that for any positive integer $q$ and 
any two knots $K$ and $K'$,  $S^3_1(K_{2,q})$ and $S^3_1(K'_{2,q})$ are $h$-cobordant?
\end{question}

\subsection{Acknowledgements}
The author was supported by JSPS KAKENHI Grant Number 15J10597.
The author would like to thank his supervisor, Tam\'{a}s K\'{a}lm\'{a}n
for his useful comments and encouragement.

\section{Proof of Theorem \ref{thm3}}

In this section, we prove Theorem \ref{thm3}.
To prove the theorem, we first prove the following lemma.
Here we identify $H_2(\punc X, \partial (\punc X); \mathbb{Z})$
with $H_2(X; \mathbb{Z})$.
\begin{lem}
\label{lem1}
Let $X \cong S^2 \times S^2 \text{ or } \mathbb{C}P^2 \# \overline{\mathbb{C}P^2}$
and $K \subset \partial (\punc X)$ a knot.
If $K$ bounds a disk $ D \subset \punc X$
with self-intersection $-n<0$ 
which represents a characteristic vector in $H_2(X;\mathbb{Z})$,
then there exists a 4-manifold $W$ which satisfies
\begin{enumerate}
\item $W$ is a simply-connected Spin 4-manifold,
\item $\partial W = S^3_1\left( K \right)$, and
\item $\beta_2(W)=\beta^+_2(W)= n$. In particular, $W$ is negative definite．
\end{enumerate}
\end{lem}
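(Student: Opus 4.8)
The plan is to build $W$ by capping $K$ off with the $(+1)$-surgery handle, cutting out a characteristic sphere to expose a Spin piece, and then filling the resulting lens-space boundary by an even positive-definite plumbing. First I would let $V := \punc(X) \cup h$ be the result of attaching a single $2$-handle $h$ to $\punc(X)$ along $K \subset \partial(\punc X) = S^3$ with framing $+1$, so that $\partial V = S^3_1(K)$ and $V$ is simply connected. The union $S := D \cup (\text{core of } h)$ is then a smoothly embedded $2$-sphere. Writing $d := [D] \in H_2(X;\mathbb{Z})$, a Mayer--Vietoris computation gives $H_2(V;\mathbb{Z}) \cong H_2(X;\mathbb{Z}) \oplus \mathbb{Z}\langle [S]\rangle$ with $[S]^2 = d^2 + 1 = 1-n$ and $[S]\cdot e = Q_X(d,e)$ for all $e \in H_2(X)$. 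The class $w := [S] - d$ then satisfies $w \perp H_2(X)$ and $w^2 = 1$, so $Q_V \cong Q_X \oplus \langle 1\rangle$; in particular $Q_V$ is unimodular of signature $1$, as it must be since $\partial V$ is an integer homology sphere.

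The crucial point is that, since $d$ is characteristic in $H_2(X)$, the class $[S]$ is characteristic in $H_2(V)$; hence the complement $V_1 := V \setminus \nu(S)$ has $w_2 = 0$ and is Spin. Its intersection form is the orthogonal complement $[S]^\perp \subset Q_V$, of rank $2$; because $[S]^2 < 0$ while $\sigma(Q_V) = 1$, this form has signature $2$, so $Q_{V_1}$ is even and positive definite. The boundary of $V_1$ has two components, namely $S^3_1(K)$ and the lens space $L := \partial \nu(S)$, the circle bundle of Euler number $1-n$ over $S^2$, whose first homology has odd order $n-1$ (note $d^2 \equiv \sigma(Q_X) \equiv 0 \pmod 8$, so $n$ is divisible by $8$).

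Next I would cap off $L$. It bounds a simply connected, even, positive-definite linear plumbing $P$ on a chain of $n-2$ spheres of square $2$, with $\beta_2(P) = n-2$. Setting $W := V_1 \cup_L P$, I would verify that $\partial W = S^3_1(K)$; that $W$ is simply connected by van Kampen, since $P$ kills $\pi_1(L)$; and that $W$ is Spin, the two Spin structures agreeing automatically because $H_1(L)$ has odd order and $L$ thus carries a unique Spin structure. By additivity $\beta_2(W) = 2 + (n-2) = n$ and $\sigma(W) = 2 + (n-2) = n$, so $W$ is positive definite with $\beta_2(W) = \beta_2^{+}(W) = n$; as $\partial W$ is a homology sphere, $Q_W$ is an even positive-definite unimodular form of rank $n$, which is exactly the filling of $S^3_1(K)$ we want.

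The hard part will be the orientation and definiteness bookkeeping in the gluing: one must fix the orientation of $L = \partial \nu(S)$ and select the plumbing $P$ with the matching orientation so that the union is positive definite rather than negative definite or indefinite, and confirm that $Q_{V_1} \oplus Q_P$ embeds in the unimodular lattice $Q_W$ with the correct index $n-1$. Checking the Spin-structure matching and the simple-connectivity should be comparatively routine once these orientations are pinned down.
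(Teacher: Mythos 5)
Your construction is correct, but it takes a genuinely different route from the paper's. Both arguments begin the same way: attach a $(+1)$-framed $2$-handle along $K$ and cap the disk $D$ with the core to get a characteristic sphere $S$ of square $1-n$ in a simply connected manifold with boundary $S^3_1(K)$ and intersection form $Q_X\oplus\langle 1\rangle$. From there the paper \emph{stabilizes}: it forms the internal connected sum of $(W_1,S)$ with $n-2$ copies of $(\mathbb{C}P^2,\mathbb{C}P^1)$ to raise the square of the characteristic sphere to $-1$, and then blows down this characteristic $(-1)$-sphere, splitting off a $\overline{\mathbb{C}P^2}$ summand whose Spin complement is the desired $W$; all the gluing there happens along $S^3$, so no lens-space or orientation bookkeeping is needed. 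You instead \emph{excise} $\nu(S)$ directly and refill the resulting lens-space boundary $\partial\nu(S)=L(n-1,1)$ with the even positive-definite linear plumbing of $(n-2)$ spheres of square $+2$. Your approach works: the orientation check you flag does go through, since $L(m,1)$ bounds both the Euler-number-$(-m)$ disk bundle and, with the same orientation, the positive-definite $A_{m-1}$-plumbing of $(+2)$-spheres (the orientation reverse of the minimal resolution of the $A_{m-1}$-singularity), and the Spin structures glue because $|H_1(L)|=n-1$ is odd. Your route costs you this lens-space identification plus Novikov additivity and a van Kampen argument, but it buys a more explicit handle picture of $W$ and makes the evenness of $Q_W$ visible directly (the $(+2)$-chain plus the Spin complement of the characteristic sphere), whereas the paper gets the Spin condition from the general fact that blowing down a characteristic $(-1)$-sphere leaves a Spin manifold. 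One small imprecision: the intersection form of $V_1$ is a finite-index sublattice of $[S]^{\perp}\subset Q_V$ rather than literally equal to it, but this affects neither the rank, the signature, nor the definiteness, which is all you use.
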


\begin{proof}
By assumption, $K$ bounds a disk $D$ in $\punc X$
which represents a characteristic vector in $H_2(X;\mathbb{Z})$.
Since $[D, \partial D]$ is a characteristic vector and $\sigma(X) = 0$,
we have $-n=[D, \partial D] \cdot [D, \partial D] \in 8\mathbb{Z}$
(see \cite[Section 3]{kirby}).

By attaching $(+1)$-framed 2-handle $h^2$ along $K$, and gluing $D$ with
the core of $h^2$, we obtain an embedded 2-sphere $S$ in $W_1 := X \cup h^2$
such that $S$ represents a characteristic vector in $H_2(W_1; {\Bbb Z})$ and satisfies 
$[S] \cdot [S] = -n+1<0$.
We next take the connected sum 
$(W_2,S')= (W_1, S) \# ( \#_{n-2} (\mathbb{C}P^2,\mathbb{C}P^1))$,
and then $(W_2,S')$ satisfies
\begin{enumerate}
\item $\partial W_2 = S^3_1(K)$,
\item $\beta^+_2(W_2)=n$，$\beta^-_{2}(W_2)=1$, and 
\item $[S']$ is a characteristic vector and $[S']\cdot[S']=-1$.
\end{enumerate}
Therefore there exists a Spin 4-manifold $W$
which satisfies $W_2 = W \# \overline{CP^2}$.
We can easily verify that this 4-manifold $W$ satisfies the assertion of Lemma \ref{lem1}.
\end{proof}

We next prove the following lemma.

\begin{lem}
\label{lem2}
For any knot
$K \subset \partial(\punc (S^2 \times S^2))$ and $k \in \mathbb{N}$,
$K_{2, 4k \pm 1}$ bounds a disk $D$ in $\punc (S^2 \times S^2)$
with self-intersection $-8k$
which represents a characteristic vector.
\end{lem}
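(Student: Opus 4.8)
The plan is to build the desired disk $D$ for the cable $K_{2,4k\pm1}$ out of a surface for $K$ itself together with the standard model of the $(2,q)$-cabling, and then arrange the self-intersection and characteristicity by absorbing copies of the generators of $H_2(S^2\times S^2)$. First I would observe that the knot $K\subset\partial(\punc(S^2\times S^2))=S^3$ automatically bounds \emph{some} surface in $\punc(S^2\times S^2)$ — indeed the capping disk idea from the introduction shows $K$ bounds a disk $D_0$ whose homology class I can take to be anything I like modulo adjusting by the two spheres $[S^2\times\mathrm{pt}]$ and $[\mathrm{pt}\times S^2]$, since $\punc(S^2\times S^2)$ is built from $S^3$ by attaching $2$-handles. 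The key geometric input is that the $(2,q)$-cable lives in a tubular neighborhood $K\times D^2$ of $K$, so a surface for $K$ in $\punc(S^2\times S^2)$ can be "doubled" (two parallel copies joined by a band realizing the $q$ twists) to produce a surface for $K_{2,q}$. The framing/twisting bookkeeping is exactly where the residue $4k\pm1$ enters and where the self-intersection number gets computed.

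The main steps I would carry out, in order, are: (i) express $K_{2,4k\pm1}$ as lying in the boundary of a neighborhood of $K$ and record its Seifert framing relative to a chosen parallel; (ii) take two parallel pushoffs of a spanning surface of $K$ inside $\punc(S^2\times S^2)$ and connect them by a band so that the resulting surface has boundary exactly the cable $K_{2,4k\pm1}$, tracking how the $\pm1$ and the factor $2$ affect both the genus/disk condition and the self-intersection; (iii) compute the self-intersection of the resulting class in terms of the framing of the cabling and the self-intersection of the surface for $K$, getting a number congruent to the target modulo the two hyperbolic generators; and (iv) correct the homology class by adding copies of $[S^2\times\mathrm{pt}]$ and $[\mathrm{pt}\times S^2]$ (each of square $0$, pairing to $1$ with the other) to simultaneously force the class to be \emph{characteristic} and to push the self-intersection to exactly $-8k$. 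Because the intersection form of $S^2\times S^2$ is the even unimodular hyperbolic form, characteristic vectors are precisely the even classes, so the characteristicity requirement is a parity condition that I can meet by an appropriate choice of these integer coefficients, and the hyperbolic pairing gives me enough freedom to land on $-8k$ exactly.

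The hard part will be step (iii): carefully pinning down the self-intersection of the doubled/banded surface and verifying it comes out to the correct value $-8k$ for \emph{both} residues $4k+1$ and $4k-1$ simultaneously, rather than merely controlling it modulo the classes I am free to add. This requires an honest framing computation — relating the surface framing of the cable to the $0$-framing coming from the $S^2\times S^2$ spheres — and checking that the parity works out so that a single characteristic class of square exactly $-8k$ exists. I expect the numerology ($2\cdot(\text{something})$ from the two strands, plus the $\pm1$ twist contribution, combining with the even hyperbolic form to give a multiple of $8$) to be the delicate point, but once the class is pinned down the remaining verifications — that the surface is a disk, that the class is characteristic, and that $[D,\partial D]^2=-8k$ — are routine. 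Finally I would note that the output of this lemma plugs directly into Lemma \ref{lem1} with $X\cong S^2\times S^2$ and $n=8k$ to yield Theorem \ref{thm3}.
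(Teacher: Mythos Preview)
Your plan has a genuine gap: as written it does not produce a \emph{disk}. In step (ii) you double ``a spanning surface of $K$'' and band the two copies; if that surface has genus $g$ the result has genus $2g$, so you only get a disk when the input is already a disk. You appeal to ``the capping disk idea from the introduction'' to get a disk $D_0$ for $K$ in $\punc(S^2\times S^2)$ whose class you can choose freely, but that construction is for $n\overline{\mathbb{C}P^2}$: crossing changes are realized by $(\pm 1)$-framed $2$-handles, which produce $\mathbb{C}P^2$ or $\overline{\mathbb{C}P^2}$ summands, not $S^2\times S^2$. Nor can the class be adjusted ``freely'' afterwards in step (iv): tubing your surface with an embedded sphere representing one hyperbolic generator is only harmless if that sphere is disjoint from the surface, but such a sphere has algebraic intersection $1$ with the other generator, so in general you create intersections that must be resolved at the cost of genus. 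The verification you flag as ``routine'' --- that the end result is a disk --- is in fact the entire content of the lemma, and your outline does not supply it.

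The paper's argument avoids all of this by never asking for a disk bounding $K$. Instead it performs saddle (hyperbolic) moves directly on the $4k\pm1$ half-twists of the cable pattern, giving a genus-zero cobordism in $S^3\times[0,1]$ from $K_{2,4k\pm1}$ to an explicit link $L$ consisting of two parallel copies of $K$ together with $2k$ small meridional circles. The key insight you are missing is the choice of Kirby picture: take $X$ to be the $4$-manifold whose $2$-handles $h^2_1,h^2_2$ are attached along $K$ and a meridian of $K$ with suitable even framings --- this is diffeomorphic to $S^2\times S^2$ --- so that the components of $L$ are exactly parallel pushoffs of the attaching circles and hence bound parallel copies of the core disks. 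Gluing the cobordism to these $2k+2$ core disks yields a single disk in class $2[h^2_1]+2k[h^2_2]$ (up to signs), which is visibly even (hence characteristic) with square $-8k$, and no post-hoc homology adjustment is needed.
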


\begin{proof}
For a given $K_{2, 4k \pm 1}$,
we apply the hyperbolic transformation shown in Figure \ref{hyperbolic}
and Figure \ref{half} to $2k$ full twists 
and a $\pm 1$ half twist of $K_{2,4k \pm 1}$ respectively.
Then we have a concordance $A$ (with genus zero) in $S^3 \times [0,1]$ 
from $K_{2,4k \pm 1} \subset S^3 \times \{ 0 \}$ to the link $L \subset S^3 \times \{ 1 \}$
shown in Figure \ref{link}. 
Note that the 4-manifold $X$ shown in Figure \ref{X} is diffeomorphic to $S^2 \times S^2$,
and $L$ is the boundary of $2k+2$ disks $E$ in $X$ with the 0-handle deleted,
where $E$ consists of 2 copies of the core of $h^2_1$ 
and $2k$ copies of the core of $h^2_2$.
By gluing $(S^3 \times [0,1], A)$ and $(\punc X, E)$ along $(S^3, L)$,
we obtain a disk $D$ in $\punc X$ with boundary $K_{2, 4k \pm 1}$.
It is easy to see that $D$ represents a characteristic vector 
and has the self-intersection $-8k$.
\end{proof}

\begin{figure}[thbp]
\hspace{-8mm}
\includegraphics[scale = 0.7]{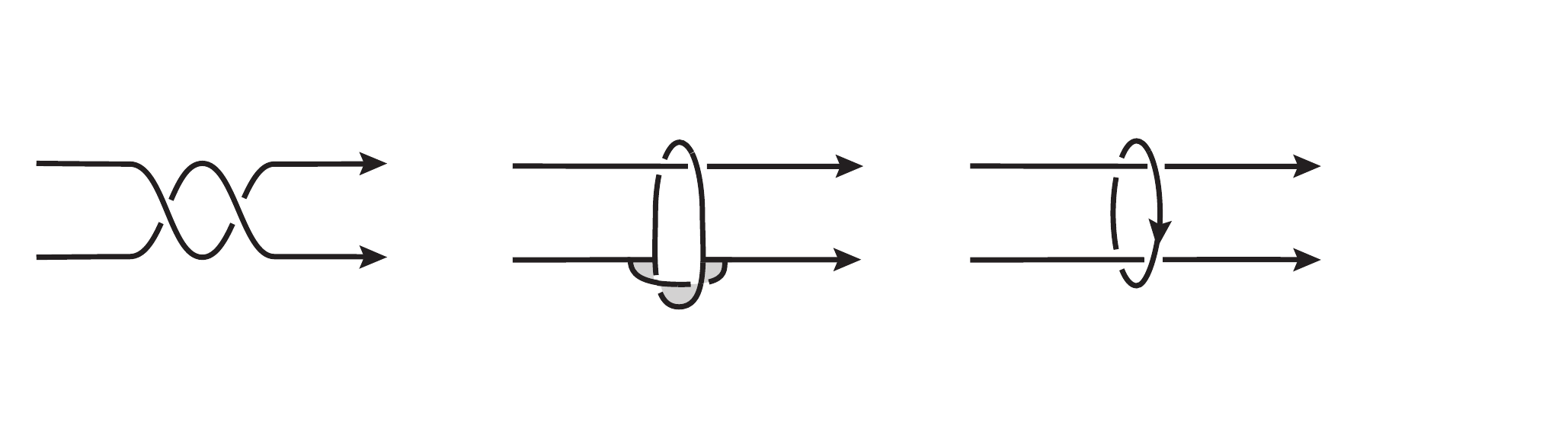}
\vspace{-14mm}
\caption{\label{hyperbolic}}
\hspace{-20mm}
\includegraphics[scale = 0.7]{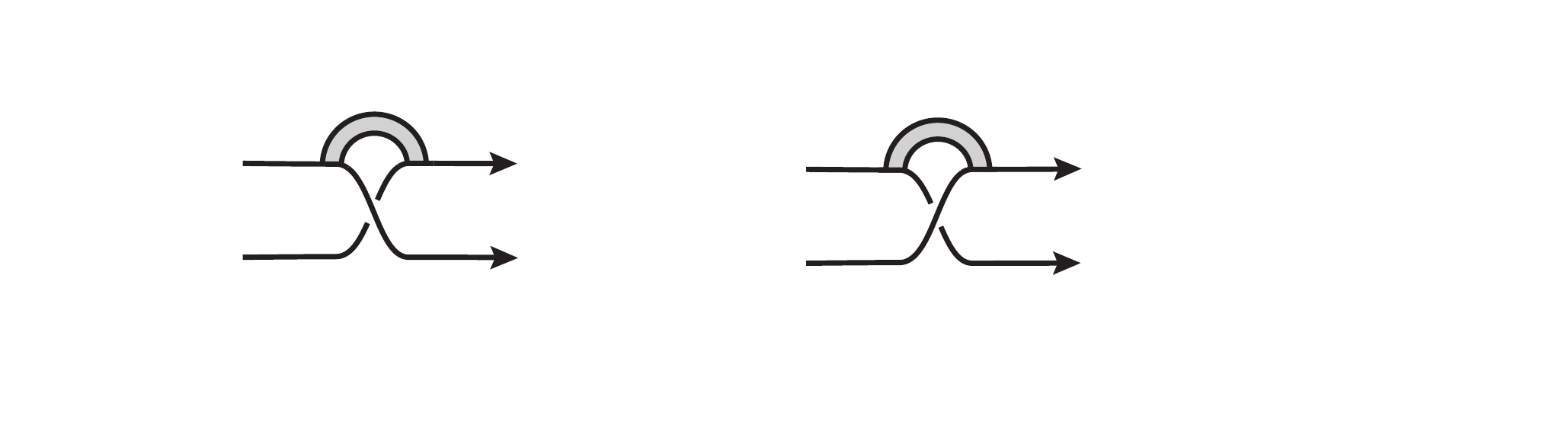}
\vspace{-16mm}
\caption{\label{half}}
\begin{minipage}[]{0.4\hsize}
\hspace{-8mm}
\includegraphics[scale = 0.65]{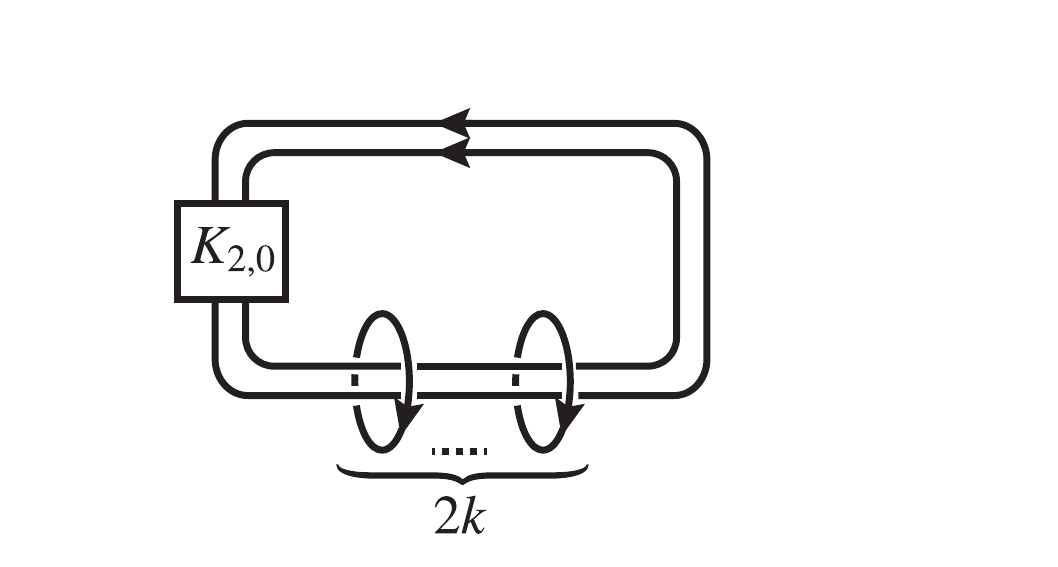}
\caption{\label{link}}
 \end{minipage}
 \begin{minipage}[]{0.4\hsize}
\hspace{-8mm}
\includegraphics[scale = 0.65]{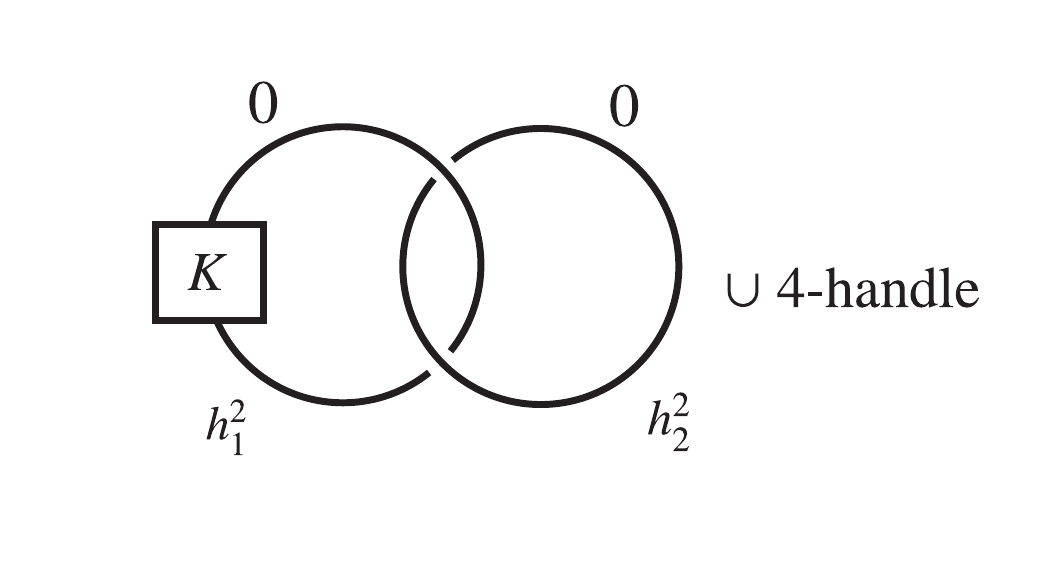}
\caption{\label{X}}
 \end{minipage}
\end{figure}

\def\proofname{Proof of Theorem \ref{thm3}}
\begin{proof}
By Lemma \ref{lem2}, $K_{2, 4k \pm 1}$ bounds a disk $D$ in $\punc (S^2 \times S^2)$
with self-intersection $-8k$ which represents a characteristic vector.
Then, by applying Lemma \ref{lem1} to the pair $(\punc (S^2 \times S^2), D)$,
we obtain the desired 4-manifold. 
\end{proof}

\section{Proof of Theorem \ref{thm1}, Theorem \ref{thm2}, and corollaries}

In this section, we prove Theorem \ref{thm1}, Theorem \ref{thm2}, and 
two corollaries.
We first prove Theorem \ref{thm1}, and then Corollary \ref{cor1} immediately
follows from Theorem \ref{thm1}.
\def\proofname{Proof of Theorem \ref{thm1}}
\begin{proof}
Let $W$ be a 4-manifold with boundary $S^3_1(K_{2,4k \pm 1})$ 
which satisfies the assertion of Theorem $\ref{thm3}$.
Since $W$ is a Spin 4-manifold, the trivial element of $H_2(W; \mathbb{Z})$
is a characteristic vector. By Theorem \ref{thm.o-s}, we have
$$
0+ 8k \leq 4d(-S^3_1(K_{4k \pm 1})).
$$
Since $d(-S^3_1(K_{4k \pm 1})) = - d(S^3_1(K_{4k \pm 1})) = -d_1(K_{4k \pm 1})$,
this gives the inequality in Theorem \ref{thm1}.

Next, we suppose that $K$ bounds a null-homologous disk $D$ in
$\punc \overline{\mathbb{C}P^2}$ for some $n \in \mathbb{N}$.
Excising a neighborhood of an interior point of $D$ in $\punc \overline{\mathbb{C}P^2}$,
we obtain a null-homologous annulus $A$ properly embedded in $\overline{\mathbb{C}P^2}$ with
the 0-handle $h^0$ and the 4-handle $h^4$ deleted such that
$(\partial h^0 , A \cap \partial h^0)$ is the unknot and 
$(\partial h^4 , A \cap \partial h^4)$ is $K$.
Furthermore, since $A$ is null-homologous, $A$ gives  a null-homologous annulus $A'$
in $\overline{\mathbb{C}P^2} \setminus (h^0 \cup h^4)$ such that 
$(\partial h^0 , A' \cap \partial h^0)$ is $T_{-2, 4k \pm 1}$ and 
$(\partial h^4 , A' \cap \partial h^4)$ is $K_{2, 4k \pm 1}$.
Now we attach a $(+1)$-framed 2-handle $h^2$ along $K_{2, 4k \pm 1}$,
and remove a neighborhood of a disk $D'$ from $\overline{\mathbb{C}P^2} \setminus (h^0 \cup h^4)$, where $D'$ is a disk obtained by gluing $A'$ with the core of $h^2$.
Then we have a negative definite 4-manifold $W$ with boundary 
$S^3_1(K_{2,4k \pm 1}) \amalg -S^3_{1}(T_{2, 4k \pm 1})$.
To see this, we regard a neighborhood of $D'$ as a $(+1)$-framed 2-handle along
the mirror image of $T_{-2,4k \pm 1}$, i.e., $T_{2, 4k \pm 1}$.
If we denote the union of $h^0$ and this 2-handle by $X$,
then $\overline{W} := (\overline{\mathbb{C}P^2} \setminus h^4) \cup h^2$ can be
regarded as the 4-manifold obtained by gluing $X$ to $W$ along $-S^3_1(T_{2,4k \pm 1})$
(see Figure \ref{cobordism}). In addition, the boundary of $\overline{W}$ is
$S^3_1(K_{2,4k \pm 1})$, hence the boundary of $W$ is the disjoint union of
$S^3_1(K_{2, 4k \pm 1})$ and $-S^3_1(T_{2,4k \pm 1})$.
The negative definiteness of $W$ follows from the fact
that the inclusion maps induces the isomorphism among the second homologies
$(i_X)_* + (i_W)_*: H_2(X; \mathbb{Z}) \oplus H_2(W; \mathbb{Z}) \cong H_2(\overline{W}; \mathbb{Z})$ 
and the intersection forms
$Q_X \oplus Q_W = Q_{\overline{W}}$, and 
$\beta^+_2(\overline{W}) = \beta^+_2(X) = 1$.

We apply Theorem \ref{thm.o-s} to the pair 
$(W, S^3_1(K_{2,4k \pm 1}) \amalg -S^3_1(T_{2,4k \pm 1}))$.
Let $\gamma \in H_2(\overline W; \mathbb{Z})$ be the generator of $h^2$
and $\overline{\gamma}_1, \ldots, \overline{\gamma}_n \in H_2(\overline{W}; \mathbb{Z})$
the generators induced from $H_2(n \overline{\mathbb{C}P^2} \setminus h^4; \mathbb{Z})$
which satisfy $Q_{\overline{W}}(\overline{\gamma}_i, \overline{\gamma}_j) = -\delta_{ij}$
(Kronecker's delta).
Then the tuple $\gamma,  \overline{\gamma}_1 , \ldots, \overline{\gamma}_n$ is a basis
of $H_2(\overline{W}; \mathbb{Z})$ and gives a representation matrix 
$$
Q_{\overline{W}} =
\left(
\begin{array}{cccc}
1&0&\cdots&0\\
0&-1&\ &0\\
\vdots&\ &\ddots &\vdots \\
0&0&\cdots&-1\\
\end{array}
\right).
$$
Moreover, we see $\image (i_X)_* = \mathbb{Z} \langle \gamma \rangle$,
and this gives 
$\image (i_W)_* = 
\mathbb{Z} \langle \overline{\gamma}_1, \ldots, \overline{\gamma}_n \rangle$. 
Hence we can identify $(H_2(W;\mathbb{Z}), Q_W)$ with
the pair of $\mathbb{Z} \langle \overline{\gamma}_1, \ldots, \overline{\gamma}_n \rangle$
and the intersection form 
$$
\left(
\begin{array}{ccc}
-1&\cdots &0\\
\vdots&\ddots &\vdots \\
0&\cdots&-1\\
\end{array}
\right).
$$
Now we apply Theorem \ref{thm.o-s} to the tuple 
$(W, S^3_1(K_{2,4k \pm 1}) \amalg -S^3_1(T_{2,4k \pm 1}), \sum_{i=1}^n \overline{\gamma}_i)$,
and we have the following inequality
\begin{equation}
\label{eq1}
\sum_{i=1}^n (-1) + n \leq 4d(S^3_1(K_{2,4k \pm 1})) -4d(S^3_1(T_{2,4k \pm 1})).
\end{equation}
Since $T_{2, 4k \pm 1}$ is an alternating knot, \cite[Corollary 1.5]{ozsvath-szabo2} gives 
the equality $d(S^3_1(T_{2,4k \pm 1})) = -2k$. 
This equality reduces $(\ref{eq1})$ to the inequality 
$$
-2k \leq d(S^3_1(K_{2,4k \pm 1})).
$$
This completes the proof.
\end{proof}

\begin{figure}[thbp]
\includegraphics[scale = 0.6]{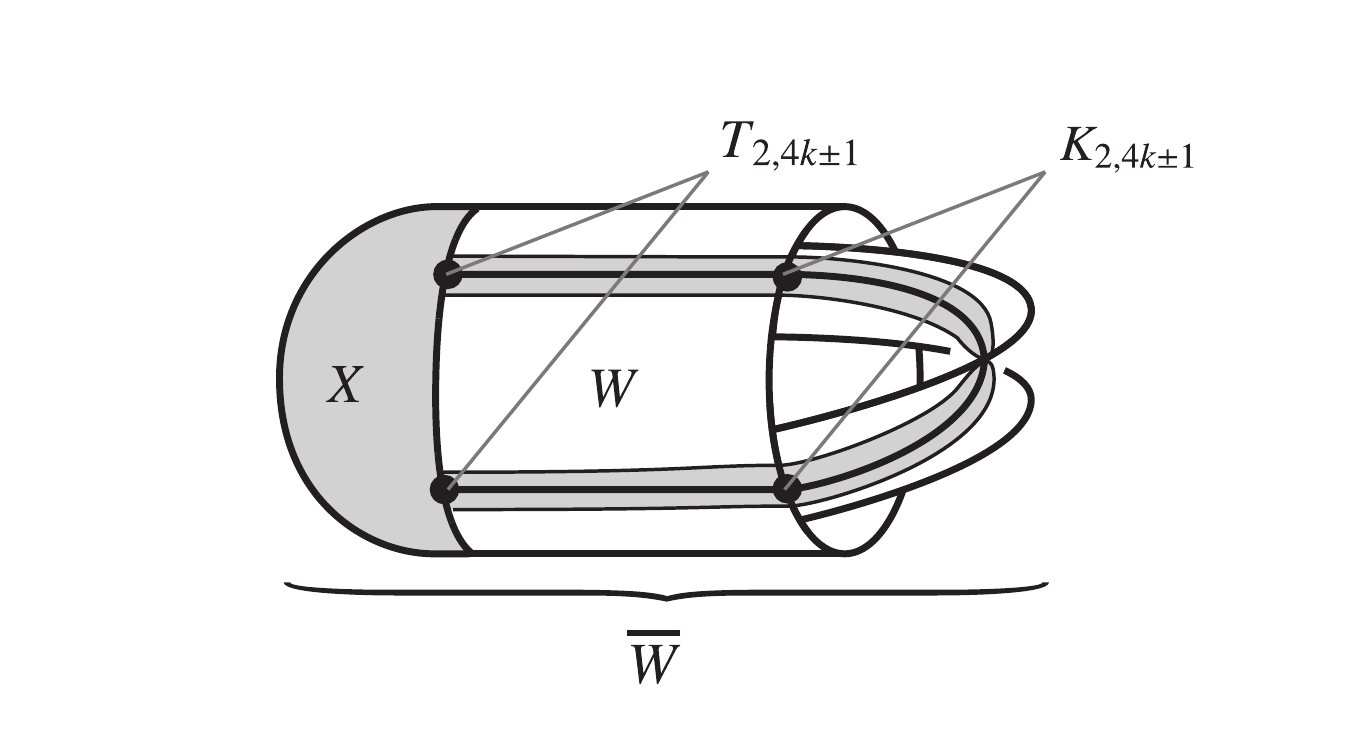}
\vspace{-4mm}
\caption{\label{cobordism}}
\end{figure}

\def\proofname{Proof of Corollary \ref{cor1}}
\begin{proof}
Let $K$ be a knot which bounds a null-homologous disk in
$\punc (n \overline{\mathbb{C}P^2})$.
By Theorem \ref{thm3}, $S^3_1(K_{2,4k \pm 1})$ bounds
a positive definite Spin 4-manifold which satisfies the conditions of $\frak{ds}$.
Hence we have $\frak{ds}(S^3_1(K_{2,4k \pm 1})) \geq k$.
Moreover, Theorem \ref{thm1} and \cite[Theorem 2.1(9)]{tange} give the inequality 
$$
\frak{ds}(S^3_1(K_{2,4k \pm 1})) \leq |d(S^3_1(K_{2,4k \pm 1}))|/2 = k.
$$
This completes the proof.
\end{proof}

We next prove Proposition \ref{cor2}.

\def\proofname{Proof of Proposition \ref{cor2}}
\begin{proof}
For any $n \in \mathbb{Z}$, let $K_n$ be the knot shown in Figure \ref{K_n}.
Since the cobordism in Figure \ref{K_n_cobordism} gives a null-homologous disk in $\punc \overline{\mathbb{C}P^2}$
with boundary $K_n$,  it follows from Corollary \ref{cor1} 
that the equality $\frak{ds}(S^3_1((K_n)_{2,4k + 1})) = k$ holds.
We denote $S^3_1((K_n)_{2,4k + 1})$ by $M_{n,k}$ and we will prove
that $M_{n,k}$ is irreducible and if $m \neq n$, then $M_{m,k}$ is not 
diffeomorphic to $M_{n,k}$.

We recall that the Casson invariant of $S^3_1(K)$, denoted by $\lambda(S^3_1(K))$,
is obtained from the following formula
$$
\lambda(S^3_1(K)) = \frac{1}{2} \Delta''_K (1),
$$
where $\Delta_K(t)$ is the normalized Alexander polynomial of $K$ such that
$\Delta_K(1) = 1$ and $\Delta_K(t)=\Delta_K(t^{-1})$ 
(see \cite[Theorem 3.1]{saveliev}).
It is easy to compute that for any knot $K$, 
$\Delta''_{K_{2,4k +1}}(1) = 
(\Delta_{K}(t^2) \cdot \Delta_{T_{2,4k +1}}(t))''|_{t=1} 
= 4\Delta''_{K}(1) + \Delta''_{T_{2,4k +1}}(1)$.
Furthermore, we can easily compute that
$\Delta_{K_n}(t)= nt -(2n-1) +nt^{-1}$ and 
$\Delta''_{K_n}(1) = 2n$.
These imply that if $m \neq n$, then 
$$
\lambda(M_{n,k}) - \lambda(M_{m,k}) = 2\Delta''_{K_n}(1) - 2\Delta''_{K_m}(1) = 4n -4m \neq 0,
$$
and hence $M_{m,k}$ is not diffeomorphic to $M_{n,k}$.

We next prove that $M_{n,k}$ is irreducible. 
The transformation shown in Figure \ref{tunnel} implies
that for any $n \in \mathbb{Z}$ and $k \in \mathbb{N}$, 
$(K_n)_{2,4k + 1}$ has the tunnel number at most 2,
and hence $M_{n,k}$ has the Heegaard genus at most 3.
By the additivity of the Heegaard genus,
if $M_{n,k}$ can be decomposed to $N_1$ and $N_2$, then
either $N_1$ or $N_2$ has the Heegaard genus 1.
Assume that $N_1$ has the Heegaard genus 1. 
Then $N_1$ is diffeomorphic to one of $S^3$, $S^1 \times S^2$, and lens spaces.
However, $M_{n,k}$ is an integer homology 3-sphere, and hence $N_1$ must be
diffeomorphic to $S^3$.
This completes the proof.
\end{proof}

\begin{figure}[thbp]
\hspace{-6mm}
\includegraphics[scale = 0.5]{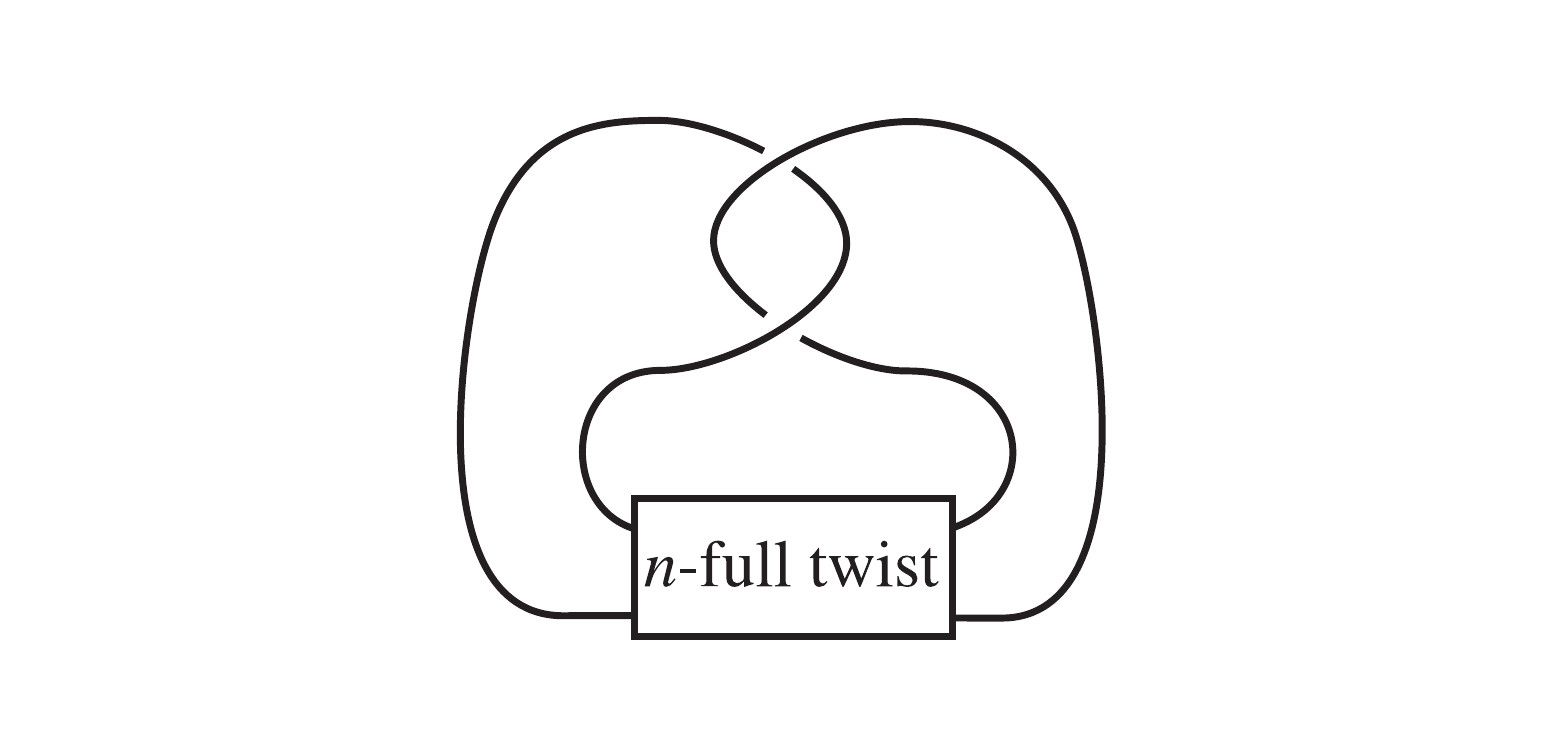}
\vspace{-6mm}
\caption{\label{K_n}}
\hspace{5mm}
\includegraphics[scale = 0.5]{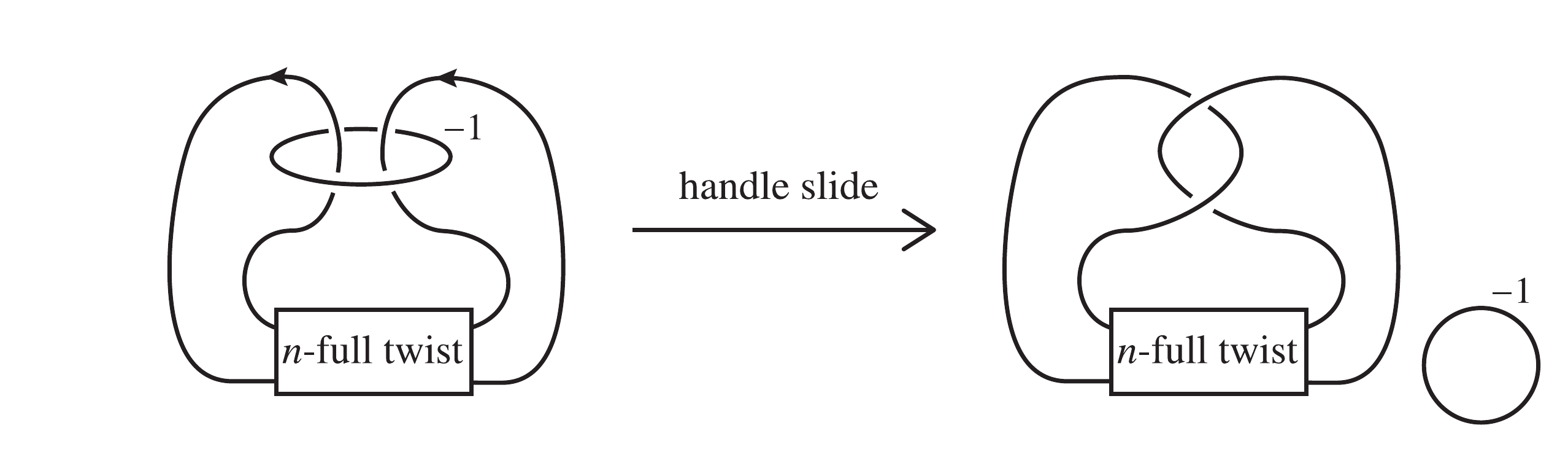}
\vspace{-10mm}
\caption{\label{K_n_cobordism}}
\hspace{-5mm}
\includegraphics[scale = 0.5]{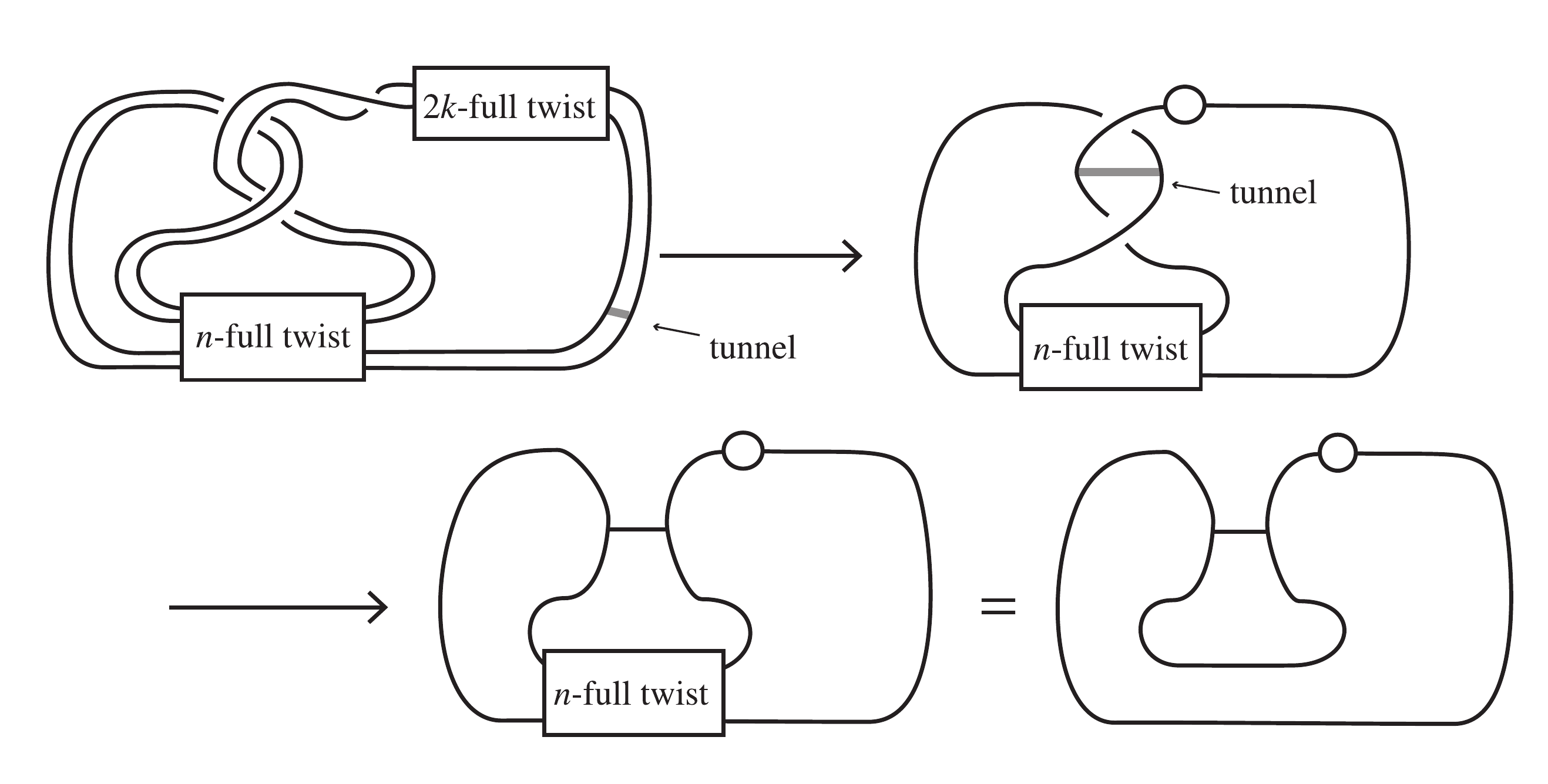}
\vspace{-8mm}
\caption{\label{tunnel}}
\end{figure}

Finally we prove Theorem \ref{thm2}.

\def\proofname{Proof of Theorem \ref{thm2}}
\begin{proof}
Suppose that $K$ is a negative knot.
Then by Theorem \ref{thm1},
we have $d_1 (K_{2, 4k \pm 1}) = -2k$ for any $k \in \mathbb{N}$.
Furthermore, it follows from \cite[Theorem 1.1]{hedden} and \cite{hom}
that $\tau (K) = -g(K)$ and $\varepsilon = -1$,
where $g(K)$ denotes the genus of $K$.
Hence by Theorem \ref{thm.hom}, we have
$$
\tau(K_{2,4k+1}) = 2\tau(K) + 2k + 1
$$
and
$$
\tau(K_{2,4k-1}) = 2\tau(K) + 2k.
$$
For any $m,n \in \mathbb{N}$ and the knots $\{ K_i \}_{i \in \mathbb{Z}}$ in Figure \ref{K_n},
we define
$$
K^{m,n} := K_{n} \# K_{n+1} \# \cdots \# K_{n+m-1},
$$
where $\#$ denotes the connected sum.
Note that $K^{m,n}$ is a negative knot and has $g(K^{m,n}) = m$ 
for any $m,n \in \mathbb{N}$.
Hence for any two even integers $a$ and $b$ with $a > b \geq 0$,
if $b/2$ is odd, then we take $\{ (K^{(2a-b+2)/4,n})_{2,2a+1} \}_{n \in \mathbb{N}}$
and we have
$$
|d_1((K^{(2a-b+2)/4,n})_{2,2a+1})|= a
$$
and
$$
2|\tau((K^{(2a-b+2)/4,n})_{2,2a+1})| = 2|(-a+ \frac{b}{2} -1) + a + 1|= b.
$$
Otherwise, we take $\{ (K^{(2a-b)/4,n})_{2,2a-1} \}_{n \in \mathbb{N}}$
and we have
$$
|d_1((K^{(2a-b)/4,n})_{2,2a-1})|= a
$$
and
$$
2|\tau((K^{(2a-b)/4,n})_{2,2a-1})| = 2|(-a+ \frac{b}{2}) + a|= b.
$$

It is easy to verify
that if $l \neq n$, then $(K^{m,l})_{2,q}$ is not concordant to $(K^{m,n})_{2,q}$
for any $m \in \mathbb{N}$ and any odd integer $q > 1$.
Actually, the Alexander polynomial 
$\Delta_{(K^{m,l})_{2,q}\# (K^{m,n})_{2,q}}(t) 
= \Delta_{K^{m,l}}(t^2) \cdot \Delta_{K^{m,n}}(t^2) \cdot (\Delta_{T_{2,q}}(t))^2$
is not of the form $f(t)f(t^{-1})$. This completes the proof.
\end{proof}

\section{Proof of Proposition \ref{prop}}
In this last section, we prove Proposition \ref{prop}.
In order to prove it, we observe
the normalized Alexander polynomial of cable knots,
while we gave geometrical observations  in the other sections.

We say that a Laurent polynomial $f(t)$ is {\it symmetric} 
if $f(t)$ satisfies $f(t) = f(t^{-1})$.
Any symmetric Laurent polynomial $f(t)$ is the form of
$$
f(t) = a_0(f) + \sum_{i =1}^{d} a_i(f)(t^i + t^{-i}).
$$
We denote $d$ by $\deg f$.
Furthermore, the set of symmetric Laurent polynomials,
denoted by $S$, is a $\mathbb{Z}$-submodule of $\mathbb{Z}[t, t^{-1}]$. 
we define $t_0(f)$ as 
$$
t_0 (f):= \sum_{i=1}^{\deg f} i a_i(f).
$$
Since $a_i$ is a homomorphism from $S$ to $\mathbb{Z}$ for any $i$,
$t_0$ is also a homomorphism.
The following proposition is derived from  \cite[Proposition 8.1]{ozsvath-szabo}.
\begin{prop}
\label{prop2}
Let $K$ be a knot in $S^3$ such that $S^3_p(K)$ is a lens space for some $p \in \mathbb{N}$,
and $\Delta_K(t)$ the normalized Alexander polynomial of $K$. Then
$$
d_1(K) = -2t_0(\Delta_K).
$$ 
\end{prop}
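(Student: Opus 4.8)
The plan is to exploit the fact that a lens space is an $L$-space, so that the hypothesis forces $K$ to be an $L$-space knot with knot Floer complex determined by $\Delta_K$, and then to read $d_1(K)$ off the resulting surgery formula and match it against the torsion coefficient $t_0(\Delta_K)$.

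First I would record that $S^3_p(K)$ being a lens space, with $p \in \mathbb{N}$, says precisely that $K$ admits a positive $L$-space surgery, i.e.\ that $K$ is an $L$-space knot. The positivity of $p$ (rather than the surgery coefficient being negative) is the feature that orients everything correctly: it identifies $K$ as a positive $L$-space knot and not its mirror. This matters because $\Delta_K$, and hence $t_0(\Delta_K)$, is insensitive to mirroring while $d_1$ is not, so the sign of the final answer is controlled exactly by the sign of $p$. Being an $L$-space knot also forces $\Delta_K$ into the alternating form $a_0 + \sum_i a_i(t^i+t^{-i})$, so that $t_0(\Delta_K) = \sum_{i=1}^{\deg \Delta_K} i\,a_i(\Delta_K)$ is a well-defined non-negative integer.

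Next I would invoke \cite[Proposition 8.1]{ozsvath-szabo}, which for a knot admitting a lens space surgery computes the correction terms of its integral surgeries directly from the Alexander polynomial. Specializing that computation to the $(+1)$-surgery expresses $d_1(K)$ as $-2$ times the relevant torsion coefficient; conceptually this is the identity $d_1(K) = d(S^3_1(K)) = -2V_0(K)$ for the top local $h$-invariant $V_0(K)$ of the staircase complex, which for an $L$-space knot equals the torsion coefficient $t_0(\Delta_K)$. Since this is exactly the homomorphism $t_0\colon S \to \mathbb{Z}$ defined above, the two identities combine to give $d_1(K) = -2t_0(\Delta_K)$.

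The step I expect to be the main (though not deep) obstacle is the convention bookkeeping in the last paragraph: verifying that the torsion coefficient produced by \cite[Proposition 8.1]{ozsvath-szabo} is literally $\sum_{i=1}^{\deg \Delta_K} i\, a_i(\Delta_K)$ with the correct overall sign, and that the surgery formula indeed collapses to the clean value $-2V_0$ at coefficient $+1$. As a consistency check I would run the torus-knot case: for $K = T_{2,4k\pm1}$ one computes $t_0(\Delta_{T_{2,4k\pm1}}) = k$, so the formula returns $d_1(T_{2,4k\pm1}) = -2k$, in agreement with the value already used in the proof of Theorem \ref{thm1}. This check simultaneously confirms that the normalization $\Delta_K(1)=1$, $\Delta_K(t)=\Delta_K(t^{-1})$ assumed for $t_0$ is the one under which \cite[Proposition 8.1]{ozsvath-szabo} is stated.
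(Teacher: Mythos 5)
Your proposal is correct and takes essentially the same route as the paper: the paper offers no independent argument for Proposition \ref{prop2} beyond observing that it is derived from \cite[Proposition 8.1]{ozsvath-szabo}, which is precisely the citation you specialize to the $(+1)$-surgery. Your additional framing via $L$-space knots and $V_0$, and the consistency check $t_0(\Delta_{T_{2,4k\pm1}})=k$, agree with the conventions and the value $d_1(T_{2,4k\pm1})=-2k$ used elsewhere in the paper.
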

It is shown in \cite[Theorem 1]{bleiler-litherland} 
that $S^3_{4pq \pm 1}((T_{p,q})_{2, 2pq \pm 1}))$ is a lens space,
and hence we only need to compute $t_0((T_{p,q})_{2, 2pq \pm 1})$
to prove Proposition \ref{prop}.
In order to compute the value, we first prove the following lemma.
Here we denote $t^i + t^{-i}$ by $T_i$.
\begin{lem}
\label{lem3}
For any symmetric Laurent polynomial $f(t)$ with $f(1)=1$
and any integer $k \geq 1$, we have
$$
t_0(f(t) \cdot T_k) = 
\left\{ 
\begin{array}{ll}
k &( k \geq \deg f)\\
k a_0(f) + \sum_{i =1}^{k} 2k a_i(f) + \sum_{i =k+1}^{\deg f} 2i a_i(f)
& (1 \leq k < \deg f)
\end{array}
\right.
.
$$
\end{lem}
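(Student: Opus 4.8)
The plan is to exploit the linearity of $t_0$ together with the elementary product formula for the basis elements $T_i$. First I would record that for nonnegative integers $i$ and $k$ one has the multiplication rule
$$
T_i \cdot T_k = T_{i+k} + T_{|i-k|},
$$
where by convention $T_0 = t^0 + t^{-0} = 2$; this follows by expanding $(t^i + t^{-i})(t^k + t^{-k})$ and regrouping the four monomials into the two symmetric pairs $t^{\pm(i+k)}$ and $t^{\pm(i-k)}$. I would also note the immediate evaluation $t_0(T_m) = m$ for every $m \geq 0$: indeed $T_m$ has $a_m = 1$ and all other coefficients zero when $m \geq 1$, while $T_0 = 2$ is constant and contributes $t_0(T_0) = 0$.

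Next I would expand $f(t) \cdot T_k$ using $f = a_0(f) + \sum_{i=1}^{\deg f} a_i(f) T_i$ and apply the product rule term by term, so that
$$
f(t) \cdot T_k = a_0(f)\, T_k + \sum_{i=1}^{\deg f} a_i(f)\bigl(T_{i+k} + T_{|i-k|}\bigr).
$$
Since $t_0$ is a homomorphism, applying it and using $t_0(T_m) = m$ gives
$$
t_0(f \cdot T_k) = k\, a_0(f) + \sum_{i=1}^{\deg f} a_i(f)\bigl((i+k) + |i-k|\bigr).
$$
The key simplification is the identity $(i+k) + |i-k| = 2\max(i,k)$, which already handles the delicate case $i=k$ uniformly: there $T_{|i-k|} = T_0$ contributes $0 = |i-k|$, consistent with the formula.

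Finally I would split into the two ranges. When $k \geq \deg f$ every index $i \leq \deg f$ satisfies $\max(i,k) = k$, so the sum collapses to $k\bigl(a_0(f) + 2\sum_{i=1}^{\deg f} a_i(f)\bigr) = k\, f(1) = k$, using the normalization $f(1) = 1$; this yields the first case. When $1 \leq k < \deg f$ I would break the sum at $i = k$, replacing $\max(i,k)$ by $k$ for $i \leq k$ and by $i$ for $i > k$, which reproduces the stated expression $k\, a_0(f) + \sum_{i=1}^k 2k\, a_i(f) + \sum_{i=k+1}^{\deg f} 2i\, a_i(f)$.

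The argument is essentially a bookkeeping computation with no serious obstacle; the only point demanding care is the behavior at $i = k$, where the cross term produces the constant $T_0 = 2$ rather than a genuine $T_m$ with $m \geq 1$, so one must confirm that its vanishing contribution to $t_0$ is correctly absorbed by the uniform formula $2\max(i,k)$.
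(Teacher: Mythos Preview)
Your argument is correct and follows essentially the same route as the paper's proof: expand $f\cdot T_k$ via $T_i T_k = T_{i+k} + T_{|i-k|}$, apply $t_0$ termwise to obtain $k\,a_0(f) + \sum_i (i+k+|i-k|)\,a_i(f)$, and then split according to whether $k \geq \deg f$. Your explicit use of $(i+k)+|i-k| = 2\max(i,k)$ and your careful treatment of the $i=k$ case (where $T_0=2$ contributes $0$ to $t_0$) are minor clarifications beyond what the paper writes, but the method is the same.
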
  
\def\proofname{Proof}
\begin{proof}
Note that $T_i \cdot T_k = (t^i +t^{-i})(t^k +t^{-k}) = T_{i+k} + T_{i-k}$.
This equality gives 
$$
f(t) \cdot T_k = a_0(f) T_k + \sum_{i =1}^{\deg f} a_i(f)T_{i+k} 
+ \sum_{i =1}^{\deg f} a_i(f) T_{i-k}
$$
and
$$
t_0(f(t) \cdot T_k) = k a_0(f) + \sum_{i =1}^{\deg f} (i+k + |i-k|) a_i(f).
$$
If $k \geq \deg f$, then we have
\begin{eqnarray*}
t_0(f(t) \cdot T_k) &=& k a_0(f) + \sum_{i =1}^{\deg f} (i+k + k-i) a_i(f)\\
\ &=&k (a_0(f) + \sum_{i =1}^{\deg f} 2a_i(f))\\
\ &=& k f(1).
\end{eqnarray*}
Since $f(1)=1$, this gives the desired equality.
It is easy to check that the equality for  the case where $1 \leq k < \deg f$.
\end{proof}

\def\proofname{Proof of Proposition \ref{prop}}
\begin{proof}
We note that for any positive odd integer $r$,
\begin{eqnarray*}
\Delta_{(T_{p,q})_{2,2r+1}}(t) &=& \Delta_{(T_{p,q})}(t^2) \cdot \Delta_{T_{2,2r+1}}(t)\\
\ &=& (-1)^r \Delta_{(T_{p,q})}(t^2) \cdot (1 + \sum_{k=1}^r (-1)^k T_k).
\end{eqnarray*}
Hence if we set $t'_0 := (-1)^r t_0(\Delta_{(T_{p,q})_{2,2r+1}}(t))$,
then we have 
\begin{equation}
\label{eq2}
t'_0 = t_0(\Delta_{T_{p,q}}(t^2)) +
\sum_{k=1}^r (-1)^k t_0(\Delta_{T_{p,q}}(t^2) \cdot T_k).
\end{equation}
We suppose that $r > \deg \Delta_{T_{p,q}}(t^2) =: d'$ and we
set $a'_i := a_i (\Delta_{(T_{p,q})}(t^2))$ for $0 \leq i \leq d'$.Then
it follows from Lemma \ref{lem3} that
$$
t_0(\Delta_{(T_{p,q})}(t^2)) =  \sum_{i=1}^{d'}i a'_i
$$
and
$$
t_0(\Delta_{(T_{p,q})}(t^2) \cdot T_k) =
\left\{ 
\begin{array}{ll}
k &( k \geq d')\\
k a'_0 + \sum_{i =1}^{k} 2k a'_i + \sum_{i =k+1}^{d'} 2i a'_i
& (1 \leq k < d')
\end{array}
\right.
.
$$
These equalities reduce (\ref{eq2}) to
$$
t'_0 = 
\left\{ 
\begin{array}{ll}
(r/2)a'_0 + \sum_{i=1}^{d'/2}(r+1)a'_{2i-1} + \sum_{i=1}^{d'/2}r a'_{2i} &(r : \text{ even})\\
-\{((r+1)/2)a'_0 + \sum_{i=1}^{d'/2}r a'_{2i-1} + \sum_{i=1}^{d'/2}(r+1) a'_{2i}\}
& (r : \text{ odd})
\end{array}
\right.
$$
(note that $d' = \deg \Delta_{T_{p,q}}(t^2) = 2 \deg \Delta_{T_{p,q}}(t)$).
Furthermore, we note that
$$
a'_i =
\left\{
\begin{array}{ll}
a_{i/2}(\Delta_{T_{p,q}}(t)) & (i : \text{ even})\\
0 & (i : \text{ odd}) 
\end{array}
\right.
.
$$
Thus we have
\begin{eqnarray*}
t'_0 &=&
\left\{
\begin{array}{ll}
(r/2)\{ a_0(\Delta_{T_{p,q}}(t)) + \sum_{i=1}^{d'/2}2 a_{i}(\Delta_{T_{p,q}}(t))\} 
&(r : \text{ even})\\
-((r+1)/2) \{ a_0(\Delta_{T_{p,q}}(t)) + \sum_{i=1}^{d'/2}2 a_{i}(\Delta_{T_{p,q}}(t)) \}
& (r : \text{ odd})
\end{array}
\right.
\\
\ &=& (-1)^r \Big\lceil \frac{r}{2} \Big\rceil\\
\ &=& \frac{-1}{2} \cdot (-1)^r d_1(T_{2,2r+1}).
\end{eqnarray*}

This implies that for $r > \deg \Delta_{T_{p,q}}(t^2)$, we have
$$-2 t_0(\Delta_{(T_{p,q})_{2,2r +1}}(t)) = d_1(T_{2,2r + 1}).$$
In particular, $(2pq \pm 1-1)/2 > \deg \Delta_{T_{p,q}}(t^2)$, and hence we have
$$
d_1 ((T_{p,q})_{2,2pq \pm 1}) = -2t_0(\Delta_{(T_{p,q})_{2,2pq \pm 1}}(t))
= d_1(T_{2,2pq \pm 1}).
$$
\end{proof}

\end{document}